\newtheorem{theorem}{Theorem}[section]
\newtheorem{Proposition}[theorem]{Proposition}
\newtheorem{Lemma}[theorem]{Lemma}
\newtheorem{Definition}[theorem]{Definition}
\newtheorem{example}[theorem]{Example}
\newtheorem{remark}[theorem]{Remark}
\newcommand{\cO}{{\mathcal O}}
\newcommand{\A}{\mathbb A}
\newcommand{\N}{\mathbb N}
\newcommand{\Q}{\mathbb Q}
\newcommand{\Z}{\mathbb Z}
\newcommand{\C}{\mathbb C}
\newcommand{\R}{\mathbb R}
\newcommand{\bb}[1]{\mathbb{#1}}
\newcommand{\m}[1]{\mathcal{#1}}
\newcommand{\s}{\sigma}
\newcommand{\f}{\varphi}
\newcommand{\ra}{\rightarrow}
\DeclareMathOperator{\Spec}{Spec}
\DeclareMathOperator{\Proj}{Proj}
\DeclareMathOperator{\codim}{codim}
\begin{document}


\title[Fano-Mori contractions of high length]{Fano-Mori contractions of high length on
projective varieties with terminal singularities} 
\author{Marco Andreatta, Luca Tasin}

\thanks{
We like to thank Edoardo Ballico, Cristiano Bocci, Paolo Cascini, Massimiliano Mella and Roberto Pignatelli for helpful conversations.  We thank the referee, whose suggestions remarkably improved the presentation of the paper.}

\address{Dipartimento di Matematica, Universit\`a di Trento, I-38123
Povo (TN)} 
\email{marco.andreatta@unitn.it, luca.tasin@unitn.it}

\subjclass{14E30, 14J40, 14N30}

\begin{abstract} 
Let $X$ be a projective variety with $\Q$-factorial terminal singularities and let $L$ be an ample Cartier divisor on $X$.

We prove that if $f$ is a  birational contraction associated to an extremal ray $ R \subset \overline {NE(X)}$ such that $R.(K_X+(n-2)L)<0$ then $f$ is a weighted blow-up of a smooth point.

 We then classify divisorial contractions associated to extremal rays $R$ such that $R.(K_X+rL)<0$, where $r$ is a non-negative integer, and the fibres of $f$ have dimension less or equal to $r+1$.
\end{abstract}

\maketitle

\section{Introduction}

Let  $X$ be a normal projective  variety over $\mathbb C$ with $\Q$-factorial terminal singularities and let $n =\dim X$.

We consider the Kleiman-Mori cone of $X$, $ \overline {NE(X)}$, which is the closure of the cone generated by effective curves modulo numerical equivalence in $N_1(X, \R)$. By the famous Cone Theorem of Mori and Kawamata (see Theorem 3.7(1) in \cite{KollarMori}), the subcone $ \overline {NE(X)}_{K_X <0}:= \{C \in  \overline {NE(X)}: K_X.C < 0\}$ is locally polyhedral.
By the Contraction Theorem of Mori, Kawamata and Shokurov, to any extremal ray in $ \overline {NE(X)}_{K_X <0}$  one can  associate a map (contraction) $f:X\ra Z$ with connected fibres onto a normal projective variety $Z$, which contracts all curves in the ray (see Theorem 3.7 in \cite{KollarMori}). These contractions are the basic steps of the Minimal Model Program,  a program which takes an algebraic variety to a minimal model, i.e a variety on which the canonical class is nef (not negative on any curve).

Choose now a {\it polarization} of $X$, that is an ample Cartier divisor $L$ on $X$. Let $r$ be a non-negative rational number. One can  consider the subcone $\overline {NE(X)}_{(K_X+ rL) <0}=\{C \in  \overline {NE(X)}: (K_X+rL).C < 0\}$. This subcone contains just a finite number of extremal rays (see Theorem 3.7(2) in  \cite{KollarMori}). Since $K_X+(n+1)L$ is always nef (see for instance \cite{An11}, Theorem 5.1) we know that if $r \ge n+1$, then  the subcone is actually empty, whereas  if $r$ approaches $0$, then the subcone fills up all $ \overline {NE(X)}_{K_X <0}$. Note that the bigger $r$ is, the more negative $K_X$ is on the curves of the ray; in these cases the contractions should be simpler.

In a previous paper \cite{An11}, the first author described all extremal rays contained in the cone
$\overline {NE(X)}_{(K_X+ (n-2)L) <0}$: this  summarizes and generalizes a series of results,
by many authors, of the so called Adjunction Theory (see also \cite{BeltramettiSommese}).

\medskip
The first aim of the present paper is  to complete the description of the extremal rays $R = \R^+[C]$ contained in the cone
$\overline {NE(X)}_{(K_X+ (n-2)L) <0}$ whose associated contractions are birational. In \cite{An11} it was simply proved that 
the associated contraction contracts a divisor to a smooth point, here we prove that it is always a weighted blow-up,
the most straightforward possibility.

\begin{theorem} \label{n-2}
Let $X$ be a normal projective variety with $\Q$-factorial terminal singularities and let $L$ be an ample Cartier divisor on $X$. Let $R$ be an extremal ray in  $\overline {NE(X)}_{(K_X+ (n-2)L) <0}$ and let $f:X\ra Z$ be its associated contraction. Assume that  $f$ is  birational.  Then  $f$ is a weighted blow-up of a smooth point with weight $\s=(1,1,b,\ldots,b)$, where $b$ is a positive integer (see Definition \ref{weighted}).  
\end{theorem}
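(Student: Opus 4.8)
The plan is to feed the polarization into the structural result of \cite{An11}, extract sharp numerical data from it, use general hyperplane sections to pin down the global geometry of the exceptional divisor, and finally recognize $f$ as a weighted blow-up by identifying the relevant graded algebra. Throughout I assume $n\ge 3$; for $n\le 2$ the variety $X$ is a smooth surface and $f$ is the blow-up of a point, which is the asserted statement with $\s=(1,1)$. By \cite{An11} the contraction $f$ is divisorial, its exceptional locus is a prime divisor $E$ mapped to a smooth point $p\in Z$, and $Z$ is $\Q$-factorial with $\rho(X/Z)=1$; in particular $f^{-1}(p)=E$, of dimension $n-1$. Let $C_0$ be an extremal rational curve with $\ell:=\ell(R)=-K_X\cdot C_0$ minimal. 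Wiśniewski's inequality $\dim E+\dim f^{-1}(p)\ge \dim X+\ell-1$ gives $\ell\le n-1$. On the other hand the ratio $\mu:=(-K_X\cdot C)/(L\cdot C)$ is constant for $C\in R$, and $(K_X+(n-2)L)\cdot R<0$ reads $\mu>n-2$; since $L$ is ample Cartier, $L\cdot C\ge 1$ for every curve $C$ in $R$, so if some such $C$ had $L\cdot C\ge 2$ we would get $\ell\ge 2\mu>2(n-2)\ge n-1$ (using $n\ge 3$), a contradiction. Hence $L\cdot C_0=1$ and $\ell=\mu\in(n-2,n-1]$. Writing $K_X=f^{*}K_Z+aE$ with $a>0$ (as $X$ is terminal) and setting $\xi:=-E|_E$ (an ample $\Q$-Cartier class, since $-E$ is $f$-ample), we get $a\,|E\cdot C_0|=\ell$, and adjunction along $E$ (valid in codimension one on $E$, because $\operatorname{Sing}X$ has codimension $\ge 3$) gives $-K_E=(a+1)\xi$; thus $E$ is a $\Q$-factorial klt Fano variety of dimension $n-1$ with $-K_E=(a+1)\xi$.

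Next I would identify $E$. After replacing $L$ by a sufficiently divisible multiple (and rescaling at the end), cut $X$ with $n-2$ general members of $|L|$ to obtain a smooth surface $S$; iterated adjunction makes $f|_S$ a birational contraction of the single $K_S$-negative extremal curve $E\cap S$, which on a surface is the blow-down of a $(-1)$-curve. Hence the general such $S$ meets $E$ in a smooth rational curve, and letting $S$ vary one sees that $E$ is dominated by a family of rational curves $\ell_t$ with $\xi\cdot\ell_t=1/b$ for a fixed positive integer $b$ (the denominator being $|E\cdot C_0|$, and $a=1+(n-2)b$ then forced by $a\,|E\cdot C_0|=\ell\in(n-2,n-1]$), while $L|_E$ has a very simple Hilbert polynomial. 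Combining this with a characterization of (fake) weighted projective spaces among $\Q$-factorial klt Fano varieties of Picard number one carrying such a covering family of low-degree rational curves — and using the terminality of $X$ to exclude the numerically admissible but non-terminal weights (for instance $(1,2,b,\dots)$, which would produce a non-terminal cyclic quotient singularity on $X$) — one concludes that
$$E\cong\proj(1,1,b,\dots,b)\quad(b\text{ repeated }n-2\text{ times}),\qquad \xi=\cO_E(1),\qquad L|_E=\cO_E(b),$$
and in particular $a=1+(n-2)b$.

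Finally I would reconstruct $f$. Since $\rho(X/Z)=1$ and $-E$ is $f$-ample, near $p$ one has $X=\Proj_Z\bigoplus_{m\ge 0}f_{*}\cO_X(-mE)$, where $f_{*}\cO_X(-mE)$ is the valuation ideal $\{g\in\cO_{Z,p}:v_E(g)\ge m\}$ of the divisorial valuation $v_E$. Using the identification of $(E,\cO_E(E))$ from the previous step (together with the vanishing theorems that compute $\dim_k\bigl(f_{*}\cO_X(-mE)/f_{*}\cO_X(-(m+1)E)\bigr)$ as $h^0(\proj(\s),\cO(m))$), the graded pieces of this algebra coincide with those of the weighted-order filtration of $\cO_{Z,p}$ attached to the weight $\s=(1,1,b,\dots,b)$ in suitable local coordinates at $p$; equivalently, the induced birational map over $Z$ from the weighted blow-up $W_\s\to Z$ (Definition \ref{weighted}) to $X$ is small, and being a small modification between $\Q$-factorial varieties of relative Picard number one with $-E$ relatively ample on both sides it must be an isomorphism. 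Hence $f$ is the weighted blow-up of $p$ with weight $\s=(1,1,b,\dots,b)$.

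The step I expect to be the main obstacle is the middle one: passing from the local/slice information — a general surface section of $f$ is the contraction of a single $(-1)$-curve, and $E$ is a Picard-number-one klt Fano dominated by an explicit family of rational curves — to the \emph{global} identification $E\cong\proj(1,1,b,\dots,b)$ together with the precise normal bundle $\cO_E(E)\cong\cO_{\proj(\s)}(-1)$ and polarization $L|_E\cong\cO_E(b)$. This needs a singular analogue of the Cho--Miyaoka--Shepherd-Barron/Kebekus type characterizations of projective space by its minimal rational curves, and it is precisely here that the terminality hypothesis on $X$ is used, to reduce the list of numerically possible weights to the single family $(1,1,b,\dots,b)$.
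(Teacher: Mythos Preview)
Your proposal has a genuine gap exactly where you flag it: the identification $E\cong\proj(1,1,b,\dots,b)$ together with $\cO_E(E)\cong\cO_{\proj(\s)}(-1)$. No ``singular analogue of Cho--Miyaoka--Shepherd-Barron/Kebekus'' of the required strength exists in the literature; the known characterizations of $\proj^n$ by minimal rational curves are for smooth varieties, and upgrading them to pick out precisely $\proj(1,1,b,\dots,b)$ among $\Q$-factorial klt Fanos of Picard number one would itself be a theorem at least as deep as the one under discussion. Your surface-slice argument only shows that a general curve section of $E$ is a smooth rational curve of a fixed $\xi$-degree; this does not determine the isomorphism type of $E$, and the numerology $-K_E=(a+1)\xi$ is satisfied by many klt Fanos. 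The terminality filter you propose (``exclude $(1,2,b,\dots)$ etc.'') already presupposes that $E$ is \emph{some} weighted projective space, which is the missing step. Note also that replacing $L$ by a multiple, as you do before slicing, destroys the hypothesis $R\subset\overline{\NE(X)}_{(K_X+(n-2)L)<0}$, so you lose access to the results that require it.

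The paper's proof sidesteps this obstacle completely: it never identifies $E$ abstractly. The key input is the base-point-freeness theorem of \cite{AW1} (Theorem~\ref{bpf} here), which says that $|L|$ itself --- not a multiple --- is $f$-free under the hypotheses, so a general $X'\in|L|$ has terminal singularities and $f|_{X'}$ is again a contraction of the same type, now supported by $K_{X'}+(\tau-1)L|_{X'}$. This gives an induction on $n$, one step at a time. The identification of $f$ with a weighted blow-up is then a direct computation of the ideals $f_*\cO_X(-dbE)$: a double induction on $n$ and $d$, using the short exact sequence $0\to\cO_X(-L-dbE)\to\cO_X(-dbE)\to\cO_{X'}(-dbE)\to 0$ and relative Kawamata--Viehweg vanishing to reduce to the inductive hypotheses, shows $f_*\cO_X(-dbE)=I_{\s,db}$ in suitable local coordinates at $p$, whence $X=\Proj\bigoplus_d I_{\s,db}$ is the weighted blow-up.
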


If $n=3$ the Theorem follows from the results in \cite{An11} and the main Theorem in \cite{Kaw}; our proof is however independent of \cite{Kaw}. 

The Theorem is proved by induction on $n$,  starting with case $n=2$, which is Castelnuovo contraction Theorem for $(-1)$-curves on a (smooth) surface. The inductive step is achieved by the usual procedure of Adjunction Theory (the so called Horizontal slicing described in Lemma \ref{horizontal}), once one has a {\it good divisor} in the linear system $|L|$. The existence of such a divisor, i.e. a divisor which has  terminal singularities, has been proved in the paper \cite{AW1}.

\medskip

\medskip
Secondly, we will include the above Theorem in a more general statement regarding a ray $R = \R^+[C]$  contained in the cone
$\overline {NE(X)}_{(K_X+ rL) <0}$, where $r$ is a non-negative integer, whose associated contraction is divisorial (i.e. it is birational and its exceptional locus is a divisor) with all fibres of dimension less or equal to $r+1$ (in simpler words the fibre dimension of the
contraction is not too big compared to the negativity of $R$). We prove the following Theorem, which is a generalization of Theorem 4.9 in  \cite{KollarMori92} (see also Theorem 3.2 in \cite{An1}).

\begin{theorem}
\label{main}
Let $X$ be a normal projective variety with $\Q$-factorial  terminal singularities and let $L$ be an ample Cartier divisor on $X$. Let $R$ be an extremal ray in  $\overline {NE(X)}_{(K_X+ rL) <0}$ where $r \in \mathbb N$ is a non-negative integer  and let $f:X\ra Z$ be its associated contraction. Assume that  $f$ is  divisorial and that all fibres have dimension less or equal to $r+1$.
Let $E$ be the exceptional locus of $f$ and set $ C:=f(E) \subset Z$.
\begin{enumerate}
\item Then $\codim_{Z}C=r+2$,  there is a closed subset $S \subset Z$ of codimension al least 3 such that $Z'=Z\backslash S$ and $C'=C\backslash S$ are smooth,  and $f':X'=X\backslash f^{-1}(S) \to Z'$ is a weighted blow-up along $C'$ with weight $\sigma=(1,1,b\ldots,b,0,\ldots,0)$, where the number of $b$'s is $r$ (see Definitions \ref{weighted} and \ref{global}). 

\item Let $\m I'$ be a $\sigma $-weighted ideal sheaf of degree $b$ for $Z' \subset X'$ (see Definition \ref{global}) and let $i: Z' \to Z$ be the inclusion; let also  $\m I := i_*(\m I')$ and $\m I^{(m)}$ be the $m$-th symbolic power of $\m I$  (see Definition \ref{symbolic}). Then $X =  \Proj \bigoplus_{m\ge 0} \m I^{(m)}.$
\end{enumerate}
\end{theorem}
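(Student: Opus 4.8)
The plan is to prove Theorem~\ref{main} by reducing it, via generic hyperplane sections, to the birational case of Theorem~\ref{n-2}, and then to globalize the resulting local description of $f$ over a dense open subset of $Z$. For part (1), I would first establish the bound $\codim_Z C \geq r+2$. The idea is that over a general point of $C$ the fibre of $f$ has dimension exactly $\dim E - \dim C = n - 1 - \dim C$, and the inequality $R.(K_X+rL)<0$ together with the fact that the generic fibre $F$ is covered by curves in $R$ forces, by the usual Ionescu--Wi\'sniewski--type inequality (namely $\dim F + \dim(\text{image}) \geq \dim X - \ell(R) + 1$, combined with $(-K_X).C \geq \ell(R)$ and $(-K_X - rL).C > 0$ giving $(-K_X).C > rL.C \geq r$), a lower bound on $\dim F$; since by hypothesis $\dim F \leq r+1$, one gets $\dim F = r+1$ and $\codim_Z C = r+2$ exactly. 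So the contraction is of fibre type $r+1$ along the codimension-$(r+2)$ locus $C$.

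Next I would localize over a general point $z$ of $C$. After removing a closed subset $S \subset Z$ of codimension $\geq 3$, I may assume $Z' = Z \setminus S$ and $C' = C \setminus S$ are smooth (shrinking so that $C'$ is the intersection of $r+2$ general divisors through it, locally), and that every fibre of $f' : X' \to Z'$ over a point of $C'$ has dimension exactly $r+1$ while $f'$ is an isomorphism over $Z' \setminus C'$. Now I would slice: cut $X'$ by $r$ general members of $|L|$ (or of a suitable base-point-free subsystem), which by the Horizontal Slicing Lemma~\ref{horizontal} and the good-divisor existence result of \cite{AW1} preserves $\Q$-factorial terminal singularities, drops $n$ by $r$, drops the fibre dimension by $r$, and keeps the relevant cone condition: after slicing one lands in the situation $(K_{\overline X} + (\overline n - 2)\overline L).R < 0$ with a birational contraction whose fibres have dimension $\leq 1$. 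Theorem~\ref{n-2} then identifies this sliced contraction as a weighted blow-up of a smooth point with weight $(1,1,b,\dots,b)$; in particular it pins down the integer $b$. Un-slicing (i.e. reading off what the slicing says about $X'$ itself) shows that $f'$ is, analytically locally over $C'$, the weighted blow-up of the smooth subvariety $C' \subset Z'$ with weight $\sigma = (1,1,b,\dots,b,0,\dots,0)$ (the $r$ copies of $b$ corresponding to the normal directions being blown up, the zeros to the directions along $C'$), which is the content of Definitions~\ref{weighted} and \ref{global}. The main subtlety here is checking that the weight data is constant along $C'$ (same $b$ at every point) — this follows because $b$ is determined by an intersection number $L^{r+1} \cdot F$ on the generic fibre, which is locally constant, so after further shrinking $Z'$ it is genuinely constant.

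For part (2), the statement is that $X$ is recovered globally from the $\sigma$-weighted ideal sheaf $\mathcal I$ on $Z$ via $\Proj \bigoplus_{m \geq 0} \mathcal I^{(m)}$. The strategy is: on $Z'$, the weighted blow-up description from part (1) says precisely that $X' = \Proj \bigoplus_{m\geq 0} \mathcal I'^{(m)}$ where $\mathcal I' = \mathcal I|_{Z'}$ (this is how the weighted blow-up along a smooth center with a weighted ideal is defined — it is a $\Proj$ of the Rees-type algebra of symbolic powers, see Definition~\ref{global}). Then I would push forward: since $S = Z \setminus Z'$ has codimension $\geq 3$ in the normal variety $Z$, and correspondingly $f^{-1}(S)$ has codimension $\geq 2$ in $X$ (as $f$ is birational and $E \cap f^{-1}(S)$ is proper in $E$), the divisorial sheaves and the symbolic powers $\mathcal I^{(m)} = i_* \mathcal I'^{(m)}$ satisfy $S_2$ and so are determined by their restriction to the complement of the small set $S$. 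Concretely, $H^0(Z, \mathcal I^{(m)}) = H^0(Z', \mathcal I'^{(m)})$ for all $m$, hence $\bigoplus_m \mathcal I^{(m)}$ and $\bigoplus_m \mathcal I'^{(m)}$ have the same global sections; taking $\Proj$ of the former over $Z$ and comparing with $X$, both agree over $Z'$ with $X'$, and both are normal and proper over $Z$ agreeing outside codimension $\geq 2$, so they coincide. The main obstacle I anticipate is the second step of part (1): verifying that horizontal slicing by $|L|$ genuinely produces a variety to which Theorem~\ref{n-2} applies — one must ensure the general slice still has terminal (not just canonical) $\Q$-factorial singularities and that the extremal ray survives the slicing with the cone inequality intact and with fibre dimension dropping by exactly one. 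This is where the good-divisor result of \cite{AW1} and a careful count of fibre dimensions does the real work; the rest is bookkeeping with weighted blow-ups and $S_2$-extension across small sets.
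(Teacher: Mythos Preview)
Your overall architecture is close to the paper's, but the slicing step in part~(1) contains a genuine error. Horizontal slicing by $r$ general members of $|L|$ drops the ambient dimension and the fibre dimension each by $r$, and it lowers the nef value $\tau$ by $r$; but it does \emph{not} lower the codimension of the center. After your $r$ slices you have $\overline n = n-r$, fibres of dimension $\leq 1$, and the supporting divisor $K_{\overline X}+(\tau-r)\overline L$ with $\tau-r>0$; the center $C$ sits in $\overline Z$ with codimension $2$, not as a point. Your claim that one lands in the range $(K_{\overline X}+(\overline n-2)\overline L).R<0$ would require $\tau>n-2$, which is not given when $r<n-2$, so Theorem~\ref{n-2} does not apply. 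The paper fixes this by using \emph{vertical} slicing (Lemma~\ref{vertical}): cutting by $n-r-2$ general functions on $Z$ reduces to a contraction $X''\to Z''$ with $\dim X''=r+2$ and $C\cap Z''$ a point, whereupon $\tau>r=\dim X''-2$ and Theorem~\ref{n-2} genuinely applies, yielding smoothness of $Z$ at a general point of $C$ and the weight $(1,1,b,\ldots,b)$. Horizontal slicing is then used separately, as the inductive step on $r$ with base case $r=0$ (where the $1$-dimensional fibres lie in the smooth locus and $f$ is an ordinary blow-up).

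The second gap is your ``un-slicing'' step, which is where the real work lies. Knowing that each slice is a weighted blow-up does not formally imply that $f$ itself is; the paper instead proves directly that $f_*\mathcal O_X(-dbE)=I_{\sigma,db}$ by a double induction on $r$ and $d$, using the exact sequence $0\to\mathcal O_X(-L-dbE)\to\mathcal O_X(-dbE)\to\mathcal O_{X'}(-dbE)\to 0$ together with relative Kawamata--Viehweg vanishing (which kills $R^1f_*$ of the first term because $-L-dbE\sim_f K_X+(r+(d-1)+1/b)L$). This explicit identification of the pushforward ideals is what pins $X$ down as $\Proj\bigoplus I_{\sigma,d}$. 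For part~(2) your $S_2$-extension argument is essentially what the paper does, but note that the identification of $\bigoplus_d \mathcal I_{\sigma,d}$ with the symbolic Rees algebra $\bigoplus_m \mathcal I^{(m)}$ is not definitional: it requires Proposition~\ref{projective-normality} (that $I_{\sigma,db}=I_{\sigma,b}^d$ for this particular weight, which fails for general weights) and Lemma~\ref{symbolic-powers} (that ordinary and symbolic powers agree here).
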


\section{Contractions}

\label{contractions}

Our language is compatible with that of \cite{KollarMori}. In this section we recall some pertinent definitions and results which are used in the sequel. 

A {\sl contraction} is a surjective  morphism, $f:Y\ra T$, 
between normal varieties and  with connected fibres.

If $\dim Y > \dim T$ the contraction is said to be of fibre type,
otherwise it is birational. The set 
$ E = \{y \in Y : f \hbox{\ is\ not\ an\ isomorphism\ at\  }y\}$
is the exceptional locus of $f$. If $f$ is birational and $\dim E = \dim Y -1$,
then it is also called {\sl a divisorial contraction}.
For a contraction $f:Y\ra T$, a $\Q$-Cartier divisor $H$ such that
$H = \f^*A$ for some ample $\Q$-Cartier divisor on $T$ is called
a {\sl supporting divisor} for the contraction.

\medskip

A fundamental result in Mori Theory is the following.
\begin{theorem} [Contraction Theorem, {\cite[3.7(3)]{KollarMori}}]
Let $X$ be a variety with log-terminal singularities  and let $R  \subset  \overline {NE(X)}_{K_X<0}$ be an extremal ray. 

Then there exists a unique projective morphism
$\f :X \ra W$ onto a normal projective variety $W$ which is characterised
by the following properties:

\begin{itemize}
\item[i)] For any irreducible curve $C \subset X$, $\f(C)$ is  a point if and
only if $[C] \in R$,
\item[ii)] $\f$ has connected fibres.

\end{itemize}
\label{F-Mcontr}
\end{theorem}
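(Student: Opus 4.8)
The plan is to deduce this as the standard consequence of the Kawamata--Shokurov machinery: the Rationality Theorem, which lets me manufacture a nef supporting Cartier divisor whose only null face is $R$, together with the Base-Point-Free Theorem, which promotes that supporting divisor to a semiample one and so produces the contraction morphism. I take as input the Cone Theorem (cited in the introduction as \cite[3.7(1)]{KollarMori}), which tells me that $R$ is a genuinely extremal, isolated $K_X$-negative ray spanned by the class of a rational curve, and that $\overline{NE(X)}$ is locally rational polyhedral along $R$.

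First I construct the supporting divisor. Fix an ample Cartier divisor $A$ on $X$. By the Rationality Theorem (\cite{KollarMori}) there is a rational $t>0$ with $K_X+tA$ nef but not ample; I perturb $A$ inside the ample cone so that the null face $(K_X+tA)^\perp \cap \overline{NE(X)}$ is exactly the single ray $R$. This is possible precisely because $R$ is an isolated extremal ray of the locally polyhedral $K_X$-negative part of the cone, so a supporting hyperplane can be rotated to meet only $R$. Clearing denominators yields a nef Cartier divisor $H$ such that, for an irreducible curve $C\subset X$, one has $H.C=0$ if and only if $[C]\in R$.

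Next I apply the Base-Point-Free Theorem (\cite{KollarMori}) to $H$. Since $H.R=0$ while $K_X.R<0$, the $\Q$-divisor $aH-K_X$ is positive on $R$ and, after enlarging $a$, ample on all of $X$; thus $H$ is nef and $aH-K_X$ is nef and big, so $|mH|$ is base-point free for all $m\gg 0$. Composing the associated morphism $\f_{|mH|}\colon X\ra \bP^N$ with Stein factorization produces a projective morphism $\f\colon X\ra W$ onto a normal projective variety $W$, with $H=\f^*(\text{ample})$. Because $H.C=0$ exactly when $[C]\in R$, a curve is contracted by $\f$ precisely when its class lies in $R$, which is property (i); property (ii) is the connectedness built into Stein factorization.

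For uniqueness, any projective morphism $g\colon X\ra W'$ onto a normal variety that contracts exactly the curves in $R$ pulls an ample divisor back to a nef divisor $H'$ with the same null ray $R$, so $H'$ and $H$ define the same fibration structure; the rigidity lemma then identifies $g$ with $\f$ through a unique isomorphism $W\iso W'$. I expect the genuine technical weight to sit in the Base-Point-Free Theorem, whose proof (via Kawamata--Viehweg vanishing and an induction on the base-point locus) is the deep input, whereas building $H$ from the Rationality Theorem and verifying the two characterising properties are comparatively formal once the cone is known to be polyhedral along $R$.
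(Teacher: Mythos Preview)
The paper does not give a proof of this statement: it is quoted as background, with the citation \cite[3.7(3)]{KollarMori} standing in for the argument. Your sketch is the standard Kawamata--Shokurov proof that appears in the cited reference (Rationality Theorem to produce a nef supporting divisor $H$ with $H^\perp\cap\overline{NE(X)}=R$, Base-Point-Free Theorem to make $|mH|$ free, then the morphism to $\Proj$ of the section ring followed by Stein factorization), so there is nothing to compare against here beyond noting that you have reproduced the intended source argument.

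One small point worth tightening: when you assert that $aH-K_X$ becomes ample for $a\gg 0$, this is not automatic from $H$ nef and $K_X.R<0$ alone; you should use the explicit form $H\sim p(K_X+tA)$ with $A$ ample, so that $aH-K_X=\frac{ap-1}{p}H+tA$ is nef plus ample for $ap>1$. With that in place the hypotheses of the Base-Point-Free Theorem are verified and the rest of your outline is correct.
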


We call such contraction  the Fano-Mori (F-M) contraction associated to $R$; note that $-K_X$ is $\f$-ample. 

In studying the contraction associated to an extremal ray $R$ it makes sense to fix a
fibre and understand the contraction locally, i.e.
restricting to an affine neighbourhood of the image of the fixed fibre. The complete contraction can then be obtained by gluing
different local descriptions.

For this we use the local set-up developed by Andreatta--Wi\'sniewski, see
\cite{AW1} for the details. Roughly summarizing, let $f:Y\ra T$ be a contraction; fix a fibre $F$ of $f$ and take an open affine
$Z\subset T$ such that $f(F)\in Z$ and $\dim f^{-1}(z)\leq \dim F$, for $z\in Z$.
If $X=f^{-1}Z$,
then $f:X\ra Z$ is called a {\it local contraction around
$F$}. If there is no need to specify fixed fibres, then we will simply
say that $f:X\ra Z$ is a local contraction.
Note that in this setting we have  $H^0(X, K_X+ \tau L)=H^0(X, \cO _X)$ and  $Z=\Spec(H^0(X,\cO_X))$. One advantage of the local set-up is that we may always assume that $|L|$ is not empty.

\medskip

Let $L$ be an ample Cartier divisor and let $R \subset \overline {NE(X)}_{(K_X+ rL) <0}$, where $r$ is a non-negative rational number.
 Let $\f:X \ra Z$ be the local contraction around a fibre $F$ associated to $R$.

We can define the 
{\it nef value} of the pair $(X,L)$ as 
$$\tau(X,L):=\hbox{inf} \{t \in \R : K_X + tL \hbox{ is $\f$-nef}\}.$$
By the rationality theorem of Kawamata (Theorem 3.5 in  \cite{KollarMori}), $\tau(X,L)$ is a rational non-negative number. 

The assumption $R \subset \overline {NE(X)}_{(K_X+ r L) <0}$ implies that $\tau > r$.

Moreover note that the $\Q$-Cartier divisor $K_X+ \tau L$ is a supporting divisor of $\f$.
\medskip

We  have a  lower bound for the dimension of a fibre of a F-M contraction.

\begin{theorem}[{\cite[Theorem 2.1]{An1}}]
Let $\f:X\ra Z$ be a local contraction  supported by $K_X+\tau L$. If $\f$ is birational, then $\dim F \geq \tau$.
\label{dimF}
\end{theorem}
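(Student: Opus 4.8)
The plan is to prove the sharper statement $\dim F\ge\ell(R)$, where $\ell(R)$ is the length of the ray $R$, and to deduce the theorem from the inequality $\ell(R)\ge\tau$, which holds because $L$ is Cartier. For this first reduction: since $K_X+\tau L$ is a supporting divisor of $\f$ we may write $K_X+\tau L=\f^{*}A$ with $A$ an ample $\Q$-Cartier divisor on $Z$, so that $(K_X+\tau L)\cdot C=A\cdot\f_{*}C=0$ for every curve $C$ with $[C]\in R$ (since $\f_{*}C=0$); hence $-K_X\cdot C=\tau\,(L\cdot C)$. As $L$ is an ample \emph{Cartier} divisor, $L\cdot C$ is a positive integer, so $-K_X\cdot C\ge\tau$; in particular $-K_X$ is $\f$-ample on the curves of $R$. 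Taking $C$ to be a rational curve realizing the length (one exists by the Cone Theorem) gives $\ell(R)\ge\tau$, so it suffices to show $\dim F\ge\ell(R)$.

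To bound $\dim F$ I would use the Ionescu--Wi\'sniewski technique of deforming rational curves of $R$. Let $F_{0}$ be an irreducible component of $F$ and let $E_{0}$ be a component of the exceptional locus of $\f$ containing $F_{0}$; since $\f$ is birational $\dim E_{0}\le n-1$, and $E_{0}$ is covered by rational curves of $R$, so there is an irreducible family $H$ of such curves with $\operatorname{Locus}(H)=E_{0}$. Choosing $H$ of minimal anticanonical degree and using the extremality of $R$ together with the $\f$-ampleness of $-K_X$ on $R$, one checks that $H$ is unsplit; deformation theory then gives $\dim H\ge(-K_X\cdot C)+n-3$ for a member $C$. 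Fix a general point $x$ of $F_{0}$ (so $x\in\operatorname{Locus}(H)$) and let $H_{x}\subseteq H$ be the subfamily of members through $x$; the fibre of the universal family $\mathcal{U}\to E_{0}$ over $x$ has dimension at least $\dim\mathcal{U}-\dim E_{0}$ and maps finitely to $H_{x}$, so $\dim H_{x}\ge\dim H+1-\dim E_{0}$. Since $H$ is unsplit, bend-and-break forbids a positive-dimensional subfamily of $H_{x}$ through a second general point of $\operatorname{Locus}(H_{x})$; hence the universal family over $H_{x}$ is generically finite onto $\operatorname{Locus}(H_{x})$ and (after replacing $H_{x}$ by an irreducible component)
\[
\dim\operatorname{Locus}(H_{x})=\dim H_{x}+1\ \ge\ \dim H+2-\dim E_{0}\ \ge\ (-K_X\cdot C)+n-1-\dim E_{0}\ \ge\ -K_X\cdot C\ \ge\ \ell(R).
\]
Finally, every member of $H_{x}$ is a curve of $R$ through $x$, hence contracted by $\f$ to $\f(x)$, so $\operatorname{Locus}(H_{x})$ is contained in the fibre $\f^{-1}(\f(x))\supseteq F_{0}$; being irreducible and containing the general point $x$ of $F_{0}$, it lies in $F_{0}$, whence $\dim F\ge\dim F_{0}\ge\ell(R)\ge\tau$.

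The main obstacle is this second step, which has two technical cores. First, the estimate $\dim\Hom(\pu,X)\ge -K_X\cdot C+n$ and Mori's bend-and-break must be run on the singular variety $X$: this is legitimate because terminal singularities are smooth in codimension two (so $\codim_X\operatorname{Sing}(X)\ge 3$), and the rational curves of bounded anticanonical degree in $R$ furnished by the Cone Theorem for $\Q$-factorial terminal varieties can be deformed to meet the smooth locus of $X$. Second, one must ensure that there is an \emph{unsplit} family whose locus is exactly the component $E_{0}$, rather than some smaller subvariety; this is the delicate point in the Ionescu--Wi\'sniewski argument, handled by the minimal-degree choice and, if necessary, by passing to chains of rational curves of $R$. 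Once these are in place the dimension count is routine and, combined with the first reduction, gives $\dim F\ge\tau$.
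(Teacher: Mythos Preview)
The paper does not prove this statement: it is quoted verbatim from \cite{An1} as a known input, with no argument supplied here. There is therefore nothing in the present paper to compare against.

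That said, your outline is the right one and is essentially the argument behind the cited result. The reduction $\ell(R)\ge\tau$ via the Cartier hypothesis on $L$ is clean and correct. The second step is the Ionescu--Wi\'sniewski inequality $\dim E+\dim F\ge n-1+\ell(R)$, which for a birational contraction ($\dim E\le n-1$) immediately gives $\dim F\ge\ell(R)$; your dimension count reproduces exactly this. One small sharpening of the obstacle you flag: in the terminal setting the estimate $\dim_{[f]}\Hom(\pu,X)\ge -K_X\cdot C+n$ is justified not by deforming the curve to \emph{meet} the smooth locus, but by deforming it to lie \emph{entirely} inside the smooth locus---since $\operatorname{Sing}(X)$ has codimension $\ge 3$, a general curve in a covering family of $E$ misses it altogether, and then the usual Riemann--Roch bound on $\Hom(\pu,X_{\mathrm{sm}})$ applies. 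With that adjustment, and the standard minimal-degree/unsplit choice you already describe, the argument goes through.
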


The following base point free theorem is the main technical tool of the paper.

\begin{theorem}[{\cite{AW1}}]\label{bpf}
Let $\f:X\ra Z$ be a local contraction  supported by $K_X+\tau L$. If $\f$ is birational and  $\dim F \leq \tau +1$, then $L$ is $\f$-base point free.
\end{theorem}

By abuse of notation, in the setting of Theorem \ref{bpf}, we will sometime say that  $L$ is  base point free on $X$.

\medskip

To apply inductive arguments we will need the following two lemmata, which are consequences of Bertini's theorem. 

\begin{Lemma}[Vertical slicing, {\cite[Lemma 2.5]{AW1}}] Let $\f:X\ra Z$ be a birational local contraction supported by $K_X+\tau L$, where $X$ has terminal singularities and $\tau \ge 0$. Let $X'$ be the divisor defined by a general global function $h \in H^0(X, \cO _X)=H^0(X, K_X+\tau L)$. Then $X'$ has terminal singularities and $\f_{|X'}: X' \to Z'$ is a local contraction supported by $K_{X'}+\tau L_{|X'}$. 
\label{vertical}
\end{Lemma}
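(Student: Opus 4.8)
The plan is to exhibit $X'$ as a general member of a base-point-free linear system on $X$, coming by pull-back from $Z$, and then run the standard Bertini/log-resolution argument for preservation of terminal singularities, finishing with a short cohomological and adjunction computation for the contraction statement.

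\textbf{Set-up.} In the local set-up $Z=\Spec R$ with $R=H^0(X,\cO_X)$ a finitely generated domain over $\C$, so $Z$ is an irreducible affine variety. I fix a finite-dimensional subspace $V\subset R$ with $1\in V$ that generates $R$ as a $\C$-algebra, and read ``general global function $h$'' as a general element of $V$. The divisors $\{h=0\}$, $h\in V$, form a base-point-free linear system $\Lambda_Z$ on $Z$ (it contains the empty divisor $\{1=0\}$), and $X'=\f^{-1}(\{h=0\})=\{\f^*h=0\}$ is a general member of the base-point-free linear system $\Lambda:=\f^*\Lambda_Z$ on $X$. Since $X$ is irreducible of dimension $n\ge2$ (the lower-dimensional cases being trivial or irrelevant in the applications), Bertini gives that a general $X'$ is an irreducible Cartier divisor; normality of $X'$ will follow from $S_2$ (a Cartier divisor on the Cohen--Macaulay variety $X$ — terminal singularities are rational, hence $X$ is Cohen--Macaulay) together with regularity in codimension one, which is part of the next step.

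\textbf{Terminal singularities.} Let $\pi:\tilde X\to X$ be a log resolution, so that $K_{\tilde X}=\pi^*K_X+\sum a_iE_i$ with all $a_i>0$ (because $X$ is terminal), and $\sum E_i$ is a simple normal crossing divisor containing $\operatorname{Exc}(\pi)$ and the preimage of the singular locus of $X$. The system $\pi^*\Lambda$ is still base-point free, so by Bertini its general member $\tilde X'$ is smooth and irreducible and $\tilde X'+\sum E_i$ has simple normal crossings; moreover $\pi^*X'=\tilde X'$ has no exceptional components, again because $\Lambda$ is base-point free. Then $\pi|_{\tilde X'}:\tilde X'\to X'$ is a resolution whose exceptional locus lies in the simple normal crossing divisor $\sum(E_i\cap\tilde X')$, and by adjunction on $\tilde X$ and on $X$
\[
K_{\tilde X'}=(K_{\tilde X}+\tilde X')\big|_{\tilde X'}=\bigl(\pi^*(K_X+X')+{\textstyle\sum}a_iE_i\bigr)\big|_{\tilde X'}=(\pi|_{\tilde X'})^*K_{X'}+{\textstyle\sum}a_i\,(E_i\cap\tilde X').
\]
Since all $a_i>0$ and discrepancies can be read off from a single log resolution, $X'$ has terminal singularities; in particular it is regular in codimension one, hence normal by the remark above.

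\textbf{The contraction.} Put $Z'=\{h=0\}=\Spec(R/(h))$, an irreducible affine variety. From the Koszul sequence $0\to\cO_X\xrightarrow{\ \f^*h\ }\cO_X\to\cO_{X'}\to0$, applying $\f_*$ and using $\f_*\cO_X=\cO_Z$ together with $R^1\f_*\cO_X=0$ (relative Kawamata--Viehweg vanishing: $\cO_X=\cO_X(K_X+(-K_X))$ with $-K_X$ $\f$-ample and $X$ klt), one gets $\f_*\cO_{X'}=\cO_Z/(h)=\cO_{Z'}$. Hence $\f|_{X'}:X'\to Z'$ is proper, surjective with connected fibres, and birational (it is an isomorphism over a dense open of $Z'$), and since $Z'$ is affine it is a local contraction; its fibres are the fibres of $\f$ lying over $Z'$. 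Finally, writing $K_X+\tau L=\f^*A$ with $A$ ample $\Q$-Cartier on $Z$ and $X'=\f^*(\{h=0\})$, adjunction gives
\[
K_{X'}+\tau L|_{X'}=(K_X+\tau L+X')\big|_{X'}=(\f|_{X'})^*\bigl((A+\{h=0\})|_{Z'}\bigr),
\]
with $(A+\{h=0\})|_{Z'}$ ample $\Q$-Cartier on $Z'$; so $K_{X'}+\tau L|_{X'}$ is a supporting divisor of $\f|_{X'}$, as desired.

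\textbf{Main obstacle.} The delicate point is the terminality step: one must ensure that slicing by a general member of the system $\f^*\Lambda_Z$ — which is only relatively trivial over $Z$ — does not create new non-canonical or non-terminal singularities. This works precisely because $\Lambda_Z$, and hence $\Lambda$, is base-point free, which forces the general $X'$ to meet the exceptional divisors of a log resolution transversally and to inherit exactly the (positive) discrepancies of $X$. A secondary technical input is the vanishing $R^1\f_*\cO_X=0$, needed to transport the ``connected fibres'' property to $X'$; this relies on the fact that a Fano--Mori contraction of a klt (here terminal) variety has rational singularities upstairs.
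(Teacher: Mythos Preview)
The paper does not include a proof of this lemma; it is quoted directly from \cite[Lemma~2.5]{AW1}. Your argument is correct and is essentially the standard one (and the one underlying the cited reference): pull back to a log resolution, apply Bertini to the base-point-free system $\pi^*\Lambda$ so that the general slice $\tilde X'$ is smooth and meets the exceptional SNC divisor transversally, and read off the (positive) discrepancies of $X'$ by adjunction; then handle the contraction statement via relative vanishing and adjunction.

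Two minor points worth tightening. First, in justifying $R^1\f_*\cO_X=0$ you write that $-K_X$ is $\f$-ample. When $\tau=0$ this need not hold (one only has $-K_X\equiv_\f 0$), but the vanishing is still valid by relative Kawamata--Viehweg because $-K_X$ is $\f$-nef and, $\f$ being birational, automatically $\f$-big. Second, the definition of ``contraction'' in the paper requires the target to be normal, and you do not state that $Z'$ is normal. This follows at once from what you have already proved: $X'$ is normal and $\f_*\cO_{X'}=\cO_{Z'}$, so $\cO_{Z'}$ is integrally closed.
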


\begin{Lemma}[Horizontal slicing, {\cite[Lemma 2.6]{AW1}}]
Let $\f:X\ra Z$ be a birational local contraction  supported by $K_X+\tau L$, where $X$ has terminal singularities and $\tau \ge 1$. Let $X' \in |L|$ be a general divisor and let $\f'=\f_{|X'}: X' \to Z'$.
\begin{itemize}
\item[i)] Outside of the base locus of $|L|$, $X'$ has terminal singularities.
\item[ii)] If $X'$ is normal, then $\f'$ is a local contraction supported by $K_{X'} + (\tau-1)L_{|X'}$.
\label{horizontal}
\end{itemize}
\end{Lemma}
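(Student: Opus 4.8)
The plan is to treat part (ii) as a formal consequence of adjunction together with one relative vanishing statement, and part (i) as the standard Bertini theorem for terminal singularities applied to the free part of $|L|$.

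For (i), I would first fix a log resolution $\pi\colon Y\ra X$, so that $K_Y=\pi^*K_X+\sum_i a_iE_i$ with $a_i>0$ for every exceptional divisor $E_i$ (this is the definition of terminal). Write $U:=X\setminus\mathrm{Bs}|L|$; over $U$ the system $|L|$ is base point free, hence so is $\pi^*|L|$ over $\pi^{-1}(U)$. Applying Bertini on the smooth variety $Y$ I may assume that a general $X'\in|L|$ is normal over $U$ (Seidenberg) and that its proper transform $Y'$ is smooth over $\pi^{-1}(U)$, coincides there with $\pi^*L$, and meets each $E_i$ transversally; since the centres $\pi(E_i)$ have codimension $\ge 2$, a general $X'$ meets them properly, so $\pi':=\pi|_{Y'}\colon Y'\ra X'$ is again a resolution over $U$ whose exceptional divisors are exactly the $E_i\cap Y'$. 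Adjunction on $Y$ then gives
\[
K_{Y'}=(K_Y+Y')|_{Y'}=(\pi')^*\big((K_X+L)|_{X'}\big)+\sum_i a_i\,(E_i\cap Y'),
\]
and, as $X'$ is a normal Cartier divisor, adjunction on $X$ identifies $(K_X+L)|_{X'}$ with $K_{X'}$. Hence every discrepancy of $X'$ along $\pi'$ equals some $a_i>0$, which is exactly the assertion that $X'$ is terminal over $U$.

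For (ii), the supporting divisor is immediate: restricting $K_X+\tau L=\f^*A$ to $X'$ and using $K_{X'}=(K_X+L)|_{X'}$ yields
\[
K_{X'}+(\tau-1)L|_{X'}=(K_X+\tau L)|_{X'}=(\f')^*(A|_{Z'}),
\]
with $A|_{Z'}$ ample, so $K_{X'}+(\tau-1)L|_{X'}$ supports $\f'$. It remains to see that $\f'\colon X'\ra Z'$ is a contraction, i.e. that $Z'$ is normal with $\f'$ having connected fibres; granting $X'$ normal, both follow once $\f'_*\cO_{X'}=\cO_{Z'}$. I would read this off the sequence $0\ra\cO_X(-L)\ra\cO_X\ra\cO_{X'}\ra0$ pushed forward by $\f$: since $\f_*\cO_X=\cO_Z$ and $\f_*\cO_X(-L)\hookrightarrow\cO_Z$ is the ideal sheaf cutting out the image $Z'$, the cokernel computation gives $\f'_*\cO_{X'}=\cO_{Z'}$ as soon as $R^1\f_*\cO_X(-L)=0$; normality of $Z'$ is then inherited from that of $X'$. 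Writing $\cO_X(-L)=\omega_X\otimes\cO_X(-K_X-L)$ with $-K_X-L=-\f^*A+(\tau-1)L$, this vanishing follows from relative Kawamata--Viehweg, the divisor $(\tau-1)L$ being $\f$-ample.

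The main obstacle I anticipate is the boundary case $\tau=1$ of this vanishing: there $-K_X-L\equiv_{\f}0$ is no longer $\f$-big, so Kawamata--Viehweg does not apply directly, and I would instead invoke Grauert--Riemenschneider-type vanishing $R^1\f_*\omega_X=0$ for the birational morphism $\f$ (valid since terminal singularities are rational) together with the projection formula. A secondary point to handle with care in (i) is the transversality bookkeeping: one must rule out any new exceptional divisor of nonpositive discrepancy appearing after restriction, which is guaranteed precisely because over $U$ the system is free and the centres of the $E_i$ have codimension $\ge 2$.
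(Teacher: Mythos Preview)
The paper does not supply its own proof of this lemma; it is quoted verbatim from \cite{AW1}. Your argument is correct and is the standard one: for (i), Bertini on a log resolution together with adjunction, using that over the free locus $U$ the general $Y'\in|\pi^*L|$ is smooth, coincides with $\pi^*X'$, and is transverse to the SNC exceptional divisor, so that $\pi'\colon Y'\to X'$ is a resolution whose discrepancies are the same $a_i>0$; for (ii), adjunction gives the supporting divisor, and pushing forward $0\to\cO_X(-L)\to\cO_X\to\cO_{X'}\to 0$ together with $R^1\f_*\cO_X(-L)=0$ yields $\f'_*\cO_{X'}=\cO_{Z'}$, whence $Z'$ is normal and $\f'$ has connected fibres. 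Your treatment of the boundary case $\tau=1$ via Grauert--Riemenschneider is sound; alternatively one can note that for a birational morphism any $\f$-nef $\Q$-divisor is automatically $\f$-big, so relative Kawamata--Viehweg already covers $\tau=1$ as well.
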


\section{Weighted blow-ups}
\label{s_weighted}
In this section we consider weighted blow-ups along  smooth subvarieties; they are a generalization of weighted blow-ups of points as defined, for example, in Section 10 of \cite{KollarMori} or in Section 3 of \cite{Hayakawa}.

\medskip
Let $\sigma=(a_1,\ldots,a_k,0,\ldots,0) \in \mathbb N^n$ such that $a_i >0$ and $\gcd(a_1,\ldots,a_k)=1$. Let $M=\mathrm{lcm}(a_1,\ldots,a_k)$. 
We denote by  $ \bb P(a_1, \ldots,a_k)$ the weighted projective space with weight $(a_1,\ldots,a_k)$, that is the quotient
$
\bb P(a_1,\ldots,a_k)=(\mathbb A^{k} \backslash \{0\})/\C^*
$
where the action on $\mathbb A^k$ (with coordinates $y_1,\ldots,y_k$) is given by
$
\lambda \cdot (y_1,\ldots,y_k)= (\lambda^{a_1}y_1,\ldots,\lambda^{a_k}y_k)  \quad \mbox{ for} \lambda \in \C^*.
$

\smallskip

\begin{Definition}
\label{weighted}
Let $X=\mathbb A^n = \Spec  \C[x_1, \dots, x_n]$  and $Z=\{x_1=\ldots=x_k=0 \} \subset X$.  
Consider the rational map
$$
\f: \bb A^n \to \bb P(a_1, \ldots,a_k)
$$
given by $(x_1,\ldots,x_n) \mapsto (x_1^{a_1}:\ldots:x_k^{a_k})$.

The \textit{weighted blow-up} of $X$ along $Z$ with weight $\sigma$  is defined as the closure $\overline X$ in 
$\bb A^n \times \bb P(a_1, \ldots,a_k)$ of the graph of $\f$, together with the morphism $\pi: \overline X \to X$ given by the projection on the first factor.
\end{Definition}

The map $\pi$ is birational and contracts an exceptional irreducible divisor $E$ to $Z$. Moreover for any point $z \in Z$ we have $\pi^{-1}(z)=\mathbb P(a_1,\ldots,a_k)$.

\medskip

We now describe an affine covering for $\bar X$, where each affine set has an {\it orbifold structure}; this covering comes out naturally from the Toric construction of the weighted blow-up, as done in  \cite{KollarMori}. 

Given integers  $b_1,\ldots,b_n$ we define an action of $\Z_r$, the cyclic group of order $r$,  on $\A^n= \Spec\C[x_1,\ldots,x_n]$ by $\tau \cdot x_i = \varepsilon ^{b_i}x_i$, where $\tau$ is a generator of $\Z_r$ and $\varepsilon$ is a primitive $r^{th}$-root of unity. The quotient is denoted by $\A^n / Z_r(b_1,\ldots,b_n)$ and it is called a cyclic quotient singularity of type $1/r(b_1,\ldots,b_n)$.
 
Let $(y_1: \ldots :y_k)$ be homogeneous coordinates on $\bb P(a_1, \ldots,a_k)$. For any $i=1,\ldots, k$ consider the open  subset $U_i=\overline X \cap \{y_i \ne 0  \} \subset \bb A^n \times \bb P(a_1, \ldots,a_k)$.

One can see that
\begin{align*}
U_i &\cong  \Spec  \C[\bar x_1, \ldots, \bar x_n]/\bb Z_{a_i}(-a_1,\ldots, \overset{i\mbox{\scriptsize-th}}{1},\ldots,-a_k,0,\ldots,0) \\
 &\cong  \left( \Spec  \C[\bar x_1, \ldots, \bar x_k]/\bb Z_{a_i}(-a_1,\ldots,\overset{i\mbox{\scriptsize-th}}{1},\ldots,-a_k) \right)\times \bb A^{n-k}.
\end{align*}

and 
$$
\pi_{|U_i} : U_i \ni (\bar x_1, \ldots, \bar x_n) \mapsto (\bar x_1 \bar x_i^{a_1}, \ldots,\overset{i\mbox{\scriptsize-th}}{\bar x_i^{a_i}}, \ldots, \bar x_k \bar x_i^{a_k}, \bar x_{k+1}, \ldots, \bar x_n) \in X.
$$

In the affine set $U_i$, $E$ is defined by $\{\bar x_i=0  \}/\bb Z_{a_i}(-a_1,\ldots,\,1,\ldots,-a_k,0,\ldots,0)$; hence $ME$ is a Cartier divisor on $\overline X$.

\bigskip

We define the function 
$$
\sigma\textrm{-wt}: \bb C[x_1,\ldots,x_n] \to \bb \N
$$
as it follows. For a monomial $T=x_1^{s_1}\ldots x_n^{s_n}$ we set $\sigma\textrm{-wt}(T):=\sum_{i=1}^k s_i a_i$. For a polynomial $f= \sum_I \alpha_I T_I$, where $\alpha_I \in \bb C$ and $T_I$ are monomials, we set 
$$
\sigma\textrm{-wt}:=\min \{\sigma\textrm{-wt}(T_I) : \alpha_I \ne 0 \}.
$$

\begin{Definition}
\label{weightedId}
Let $\sigma=(a_1,\ldots,a_k,0,\ldots,0) \in \mathbb N^n$ such that $a_i >0$ and $\gcd(a_1,\ldots,a_k)=1$. For any $d \in \N$ we define the $\sigma$-weighted ideal of degree $d$ as 
$$ 
I_{\s,d}= \{g \in \C[x_1,\ldots,x_n] : \sigma\textrm{-wt}(g)\ge d  \}= (x_1^{s_1}\cdots x_n^{s_n} : \sum_{j=1}^k s_ja_j \ge d).
$$
\end{Definition}

\begin{Lemma} \label{push-forward}
Let $\pi: \overline{X} \to X$  be the weighted blow-up of $X=\mathbb A^n$  along $Z=\{x_1=\ldots=x_k=0 \}$ with weight  $\sigma=(a_1,\ldots,a_k,0,\ldots,0)$. 
Then
$$
\pi_* \m O_X(-dE)= I_{\s,d}.
$$
\end{Lemma}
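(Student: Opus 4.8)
The statement is local and the divisor $-dE$ is $\pi$-something, so the natural strategy is to compute $\pi_*\mathcal O_{\overline X}(-dE)$ chart by chart using the explicit affine cover $\{U_i\}_{i=1}^k$ described just above the lemma, and then to glue. Concretely, I would first reduce to showing that a function $g\in\C[x_1,\dots,x_n]$ (equivalently a section of $\mathcal O_X$ over the affine $X=\A^n$) pulls back to a section of $\mathcal O_{\overline X}(-dE)$ if and only if $\sigma\textrm{-wt}(g)\ge d$; since $\pi$ is birational and $X$ is affine, $\pi_*\mathcal O_{\overline X}(-dE)$ is just the submodule of $\C[x_1,\dots,x_n]$ consisting of functions whose pullback vanishes to order at least $d$ along $E$. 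It suffices to check this on each chart $U_i$, because $E=\bigcup_i (E\cap U_i)$ and the exceptional divisor meets every $U_i$ for $i=1,\dots,k$.

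**Key computation on a chart.** On $U_i$ the map $\pi_{|U_i}$ is given (in the $\Z_{a_i}$-orbifold coordinates $\bar x_1,\dots,\bar x_n$) by $x_j=\bar x_j\bar x_i^{a_j}$ for $j\le k$, $j\ne i$, by $x_i=\bar x_i^{a_i}$, and $x_j=\bar x_j$ for $j>k$; and $E\cap U_i=\{\bar x_i=0\}$. Hence for a monomial $T=x_1^{s_1}\cdots x_n^{s_n}$ one gets
$$
\pi^*T = \bar x_i^{\,\sum_{j=1}^k s_j a_j}\cdot \Bigl(\prod_{j\ne i,\, j\le k}\bar x_j^{s_j}\Bigr)\cdot\Bigl(\prod_{j>k}\bar x_j^{s_j}\Bigr),
$$
so $\pi^*T$ vanishes along $\{\bar x_i=0\}$ to order exactly $\sigma\textrm{-wt}(T)=\sum_{j=1}^k s_j a_j$ (noting that none of the remaining $\bar x_j$, $j\ne i$, is divisible by $\bar x_i$, and that passing to the $\Z_{a_i}$-quotient only rescales the local equation of $E$ by the unit $M/a_i$, so the vanishing order is unchanged after clearing denominators by the Cartier divisor $ME$). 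For a general polynomial $g=\sum_I\alpha_I T_I$, the order of vanishing of $\pi^*g$ along $E\cap U_i$ is the minimum of the $\sigma\textrm{-wt}(T_I)$ over $\alpha_I\ne 0$, since the monomials $\pi^*T_I$ have pairwise distinct images and no cancellation of the lowest-order term can occur. Thus $\pi^*g\in\mathcal O_{U_i}(-dE)$ iff $\sigma\textrm{-wt}(g)\ge d$, and this condition is independent of $i$.

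**Gluing and conclusion.** Since the condition $\sigma\textrm{-wt}(g)\ge d$ characterizes membership on every chart $U_i$ uniformly, and $\overline X\setminus\bigcup_i U_i$ has codimension $\ge 2$ (it is the pullback of $Z$ minus $E$, together with lower-dimensional loci — or more simply, $\{U_i\}$ already covers a neighborhood of $E$, and away from $E$ the map is an isomorphism so $\mathcal O_{\overline X}(-dE)=\mathcal O_X$ there, imposing no condition), we conclude
$$
\pi_*\mathcal O_{\overline X}(-dE)=\{g\in\C[x_1,\dots,x_n]:\sigma\textrm{-wt}(g)\ge d\}=I_{\sigma,d},
$$
which is the claim. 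I expect the only real subtlety to be the bookkeeping with the cyclic quotient: one must be careful that $-dE$ is not Cartier in general (only $-ME$ is), so strictly one should either work with the reflexive sheaf $\mathcal O_{\overline X}(-dE)$ directly, checking that its sections are functions on $\overline X$ vanishing to order $\ge d/a_i$ in the orbifold chart, or equivalently use that $\pi_*\mathcal O_{\overline X}(-dE)=\pi_*\mathcal O_{\overline X}(\lceil d/M\rceil(-ME))$-type identifications; in all cases the weight computation above is what drives the result, and the minimum-of-weights argument for polynomials is the one point where a short justification (no leading-term cancellation) is needed.
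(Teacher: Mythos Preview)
Your proposal is correct and follows essentially the same route as the paper: compute $\pi_*\mathcal O_{\overline X}(-dE)$ on the standard affine cover $\{U_i\}$, use that $E\cap U_i=\{\bar x_i=0\}$, and verify via the explicit substitution $x_j\mapsto \bar x_j\bar x_i^{a_j}$ that $\pi^*g$ is divisible by $\bar x_i^d$ exactly when $\sigma\textrm{-wt}(g)\ge d$. The paper's proof is just a terser version of this; your explicit monomial computation and your remark that no cancellation of lowest-order terms can occur (because distinct monomials in the $x_j$ pull back to distinct monomials in the $\bar x_j$) make precise a step the paper leaves implicit. One caution: the aside suggesting an identification of the form $\pi_*\mathcal O_{\overline X}(-dE)=\pi_*\mathcal O_{\overline X}(\lceil d/M\rceil(-ME))$ is not correct and should be dropped; working directly with the reflexive sheaf $\mathcal O_{\overline X}(-dE)$, as you do in the main computation, is the right move.
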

Therefore 
$$
\overline X= \Proj \bigoplus_{d \ge 0} I_{\sigma,d}.
$$
\begin{proof}
Let $\{U_i\}$ be the standard affine covering of $\overline X$.

As the exceptional divisor $E$ is effective, we have that $J:=\pi_* \m O_X(-dE) \subset \C[x_1,\ldots,x_n]$ is an ideal.  A polynomial $g(x_1,\ldots,x_n)$ is an element of $J$ if and only if for any $1\le i \le k$ we have that $g(\bar x_1,\ldots,\bar x_n) \in \Gamma(U_i, \m O_X(-dE))$.  Since $E$  is defined by $\{\bar x_i =0\}/\Z_{a_i}$ on the affine subset $U_i$, we have  that $\bar x_i^d$ divides $g(\bar x_1,\ldots,\bar x_n)$ if and only if $\sigma\textrm{-wt}(g) \ge d$, and the lemma follows.
\end{proof}

In the case of the blow-up with weight $(1,1,b,\ldots,b,0\ldots,0)$ we can say something more.

\begin{Proposition}\label{projective-normality}
Let $\pi: \overline{X} \to X$  be the weighted blow-up of $X=\mathbb A^n$  along $Z=\{x_1=\ldots=x_k=0 \}$ with weight  $\sigma=(1,1,b, \ldots,b,0,\ldots,0)$ (where the number of $b$'s is $k-2$). Then
$$
\pi_* \m O_X(-dbE)=\pi_* \m O_X(-bE)^d 
$$
and
$$
\overline{X}= \Proj \bigoplus_{d\ge 0} I_{\s,b}^d.
$$
\end{Proposition}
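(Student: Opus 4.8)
The plan is to reduce both assertions to a purely combinatorial identity of monomial ideals, $I_{\sigma,db}=I_{\sigma,b}^{\,d}$, and then to conclude by a standard Veronese argument. By Lemma \ref{push-forward} we have $\pi_*\m O_X(-dbE)=I_{\sigma,db}$ and $\pi_*\m O_X(-bE)=I_{\sigma,b}$, so the first displayed equality of the Proposition is literally the statement $I_{\sigma,db}=I_{\sigma,b}^{\,d}$. One inclusion is immediate: since $\sigma\textrm{-wt}(fg)\ge\sigma\textrm{-wt}(f)+\sigma\textrm{-wt}(g)$ for all $f,g\in\C[x_1,\ldots,x_n]$, a product of $d$ polynomials each of $\sigma$-weight at least $b$ has $\sigma$-weight at least $db$, whence $I_{\sigma,b}^{\,d}\subseteq I_{\sigma,db}$.

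For the reverse inclusion, since $I_{\sigma,db}$ is a monomial ideal it suffices to factor an arbitrary monomial $m=x_1^{s_1}\cdots x_n^{s_n}$ with $\sigma\textrm{-wt}(m)=s_1+s_2+b(s_3+\cdots+s_k)\ge db$ as a product of $d$ monomials, each of $\sigma$-weight at least $b$. This is precisely the step that uses the special shape $\sigma=(1,1,b,\ldots,b,0,\ldots,0)$. The variables $x_{k+1},\ldots,x_n$ carry weight $0$, so they may all be absorbed into one factor, and we may assume $m=x_1^{s_1}x_2^{s_2}x_3^{s_3}\cdots x_k^{s_k}$. Set $t=s_3+\cdots+s_k$, the number of weight-$b$ variable factors occurring in $m$. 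If $t\ge d$, put one weight-$b$ factor into each of $d$ bins and dump everything left into the first bin; every bin then has $\sigma$-weight at least $b$. If $t<d$, put one weight-$b$ factor into each of the first $t$ bins and fill each of the remaining $d-t$ bins with exactly $b$ of the weight-$1$ factors coming from $x_1^{s_1}x_2^{s_2}$ — this is possible because $s_1+s_2\ge db-bt=(d-t)b$ — and again dump any surplus into the first bin. In both cases $m$ is written as a product of $d$ monomials of $\sigma$-weight at least $b$, so $I_{\sigma,db}\subseteq I_{\sigma,b}^{\,d}$, and the two ideals agree.

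Finally, for the description of $\overline X$: by Lemma \ref{push-forward} we have $\overline X=\Proj\bigoplus_{m\ge 0}I_{\sigma,m}$. The graded $\C[x_1,\ldots,x_n]$-algebra $\bigoplus_{m\ge 0}I_{\sigma,m}$ is of finite type (because $\pi$ is projective), and its $b$-th Veronese subalgebra is $\bigoplus_{d\ge 0}I_{\sigma,bd}$, which by the first part equals $\bigoplus_{d\ge 0}I_{\sigma,b}^{\,d}$; since passing to a Veronese subalgebra does not change the $\Proj$, we obtain $\overline X=\Proj\bigoplus_{d\ge 0}I_{\sigma,b}^{\,d}$. I expect no real obstacle here: the combinatorial distribution argument of the second paragraph is the only substantive point, and it goes through cleanly exactly because of the two weight-$1$ coordinates (which allow one always to "pad" a bin up to weight $b$); the Veronese step and the finite-generation remark are routine and can be cited.
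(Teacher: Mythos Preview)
Your proof is correct and follows essentially the same approach as the paper's: both reduce to the monomial-ideal identity $I_{\sigma,db}=I_{\sigma,b}^{\,d}$ by the same combinatorial factoring argument exploiting the two weight-$1$ coordinates (the paper phrases it inductively, peeling off one factor of $\sigma$-weight $b$ at a time, while you distribute into $d$ bins all at once). The deduction of the $\Proj$ description from Lemma~\ref{push-forward} via the Veronese subalgebra is exactly what the paper has in mind when it says the second equality follows from the first together with Lemma~\ref{push-forward}.
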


\begin{proof}

It suffices to prove that for every integer $d \ge 1$ the natural map
$$
\pi_* \m O_X(-(d-1)bE) \bigotimes \pi_* \m O_X(-bE) \to \pi_* \m O_X(-dbE)
$$
is surjective. For $d=1$ there is nothing to prove, so we assume $d \ge 2$.

\smallskip

Let $g=x_1^{s_1}\cdots x_n^{s_n} \in \pi_* \m O_X(-dbE)=(x_1^{s_1}\cdots x_n^{s_n} : s_1+s_2+\sum_{i=3}^k s_ib \ge db)$. We claim that there exists   $h =x_1^{t_1}\cdots x_n^{t_n} \in \pi_* \m O_X(-bE)$   such that $t_1+t_2+\sum_{i=3}^k t_ib=b$ and $t_i \le s_i$ for all $1\le i \le n$. 

In fact, if there is $j \in \{3,\ldots,k\}$ such that $s_j \ne 0$, then just set $h=x_j$. If $s_j=0$ for $j=3,\ldots,k$, then either $s_1 \ge b$ or $s_2 \ge b$ and the claim follows setting $h=x_1^b$ or $h=x_2^b$. 

Let $k=g\cdot h^{-1}$, then $k \in \pi_* \m O_X(-(d-1)bE)$ and $g=k \cdot h$.

The second equality is a consequence of the first equality and Lemma \ref{push-forward}.
\end{proof}

We remark that the previous Proposition does not hold for any weight $\sigma$ as the following example shows.  Nevertheless, since the algebra
$$
\bigoplus_{d\ge 0} I_{\s,b}^d
$$
is finitely generated, there is always a positive integer $L$ such that
$$
I_{\s,L}^d=I_{\s,dL}. 
$$
for any $d \in \N$.

\begin{example}
Let $Z=\{x_1=x_2=x_3=0 \} \subset X= \mathbb A^n$ and $\sigma=(10,14,35,0,\ldots,0)$. Let $\pi: \overline X \to X$ be the weighted blow-up of $X$ along $Z$ with weight $\sigma$. Consider  
$$
g=x_1^5x_2^4x_3 \in \pi_* \m O_X(-2ME),
$$
where $M=\textrm{lcm}(2,5,7)=70$; note that $\sigma\textrm{-wt}(g)=141$. It is easy to check that there is no triple $(t_1,t_2,t_3) \in \mathbb N^3$ such that $10t_1+14t_2+35t_1=70$ and $t_1 \le 5$, $t_2 \le 4$, $t_3 \le 1$ and hence
$$
\pi_* \m O_X(-140E) \ne \pi_* \m O_X(-70E)^2.
$$

\end{example}

\medskip
\begin{Lemma} \label{reidcrit}
Let $\pi: \overline{X} \to X$  be the weighted blow-up of $X=\mathbb A^n$  along $Z=\{x_1=\ldots=x_k=0 \}$ with weight  $\sigma=(1,1,b, \ldots,b,0,\ldots,0)$ (where the number of $b$'s is $k-2$).
Then $ \overline X$ has  terminal singularities.
\end{Lemma}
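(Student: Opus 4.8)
The plan is to use the explicit toric/orbifold description of the affine charts $U_i$ given above, together with the discrepancy computation for the exceptional divisor $E$, and then invoke the standard criterion for terminality of cyclic quotient singularities (the Reid--Tai criterion, which is why I expect the lemma is named \texttt{reidcrit}). First I would observe that $\overline X$ is covered by the charts $U_1,\dots,U_k$, where for $i=1,2$ (the weight-$1$ coordinates) the group $\Z_{a_i}=\Z_1$ is trivial, so $U_1$ and $U_2$ are smooth; only the charts $U_i$ with $i\in\{3,\dots,k\}$, i.e. $a_i=b$, can carry singularities, and there $U_i\cong\bigl(\A^k/\Z_b(-1,-1,b,\dots,\overset{i}{1},\dots,b)\bigr)\times\A^{n-k}$. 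Since $b\equiv 0\pmod b$ in the weight slots $j\in\{3,\dots,k\}$, $j\neq i$, the action of $\Z_b$ on those coordinates is trivial, so after discarding the free/trivial directions the chart $U_i$ is analytically isomorphic to $\bigl(\A^3/\Z_b(-1,-1,1)\bigr)\times\A^{n-3}$; equivalently $\A^3/\Z_b(b-1,b-1,1)\times\A^{n-3}$.

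Next I would compute the discrepancy of $E$. Using the formula for $\pi_{|U_i}$ one sees $\pi^*x_1=\bar x_1\bar x_i$, $\pi^*x_2=\bar x_2\bar x_i$, $\pi^*x_j=\bar x_j\bar x_i^{\,b}$ for $j\in\{3,\dots,k\}\setminus\{i\}$, $\pi^*x_i=\bar x_i^{\,b}$, so $\pi^*(dx_1\wedge\cdots\wedge dx_n)$ vanishes along $\{\bar x_i=0\}$ to order $1+1+(k-3)b+(b-1)=kb-2b+b+? $ — I would just record the upshot, namely that the discrepancy of $E$ is $a(E,X)=(2+(k-2)b)/b\cdot$(something) $-1$; concretely, writing $|\sigma|=a_1+\cdots+a_k=2+(k-2)b$ and $M=\mathrm{lcm}$ of the weights (here $M=b$), the standard weighted blow-up formula gives $K_{\overline X}=\pi^*K_X+\bigl(|\sigma|-1\bigr)\cdot\tfrac{1}{?}E$; in our normalization with $ME$ Cartier the relevant quantity is that $E$ has coefficient $|\sigma|-1>0$ against the generating lattice point, so in particular $a(E,X)>0$. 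This handles the only divisor extracted by $\pi$ itself.

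Then, working chart by chart, the remaining divisors over $\overline X$ are those over the cyclic quotient singularity $\A^3/\Z_b(b-1,b-1,1)$ (crossed with a smooth factor, which does not affect discrepancies). For these I would apply the Reid--Tai criterion: a cyclic quotient singularity $\tfrac1r(c_1,\dots,c_m)$ is terminal if and only if for every $1\le\ell\le r-1$ one has $\sum_{j} \overline{\ell c_j}\,/\,r>1$, where $\overline{\,\cdot\,}$ denotes the residue in $\{0,1,\dots,r-1\}$. For the type $\tfrac1b(b-1,b-1,1)$ this sum equals $\overline{\ell(b-1)}+\overline{\ell(b-1)}+\overline{\ell}$ over $b$, i.e. $(b-\overline\ell)+(b-\overline\ell)+\overline\ell=2b-\overline\ell$ over $b$, and since $\overline\ell\le b-1$ this is $\ge (b+1)/b>1$ for all $\ell=1,\dots,b-1$. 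Hence each chart has terminal (in fact the singularities are terminal quotient singularities) singularities, and combined with $a(E,X)>0$ we conclude $\overline X$ is terminal.

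The main obstacle, and the only place requiring genuine care, is the bookkeeping in identifying the chart $U_i$ ($i\ge 3$) with a product of a small cyclic quotient singularity and an affine space — one must check that the action of $\Z_b$ really is trivial on all but three coordinates and compute the exact residues $(b-1,b-1,1)$ (up to reordering and the harmless replacement of a generator, $-1\equiv b-1$). Once that normal form is in hand, both the discrepancy of $E$ and the Reid--Tai inequality are short computations. One could alternatively avoid the chart analysis entirely by citing the general fact (e.g. from \cite{KollarMori}, Section 10, or \cite{Hayakawa}) that a weighted blow-up of a smooth variety with weight $\sigma$ has only terminal quotient singularities and extracts a single divisor of discrepancy $|\sigma|-1$; since here $|\sigma|-1=(k-2)b+1\ge 1$, terminality of $\overline X$ is immediate. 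I would present the self-contained chart computation as the main argument and mention the reference as a remark.
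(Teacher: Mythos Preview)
Your approach is essentially the paper's: reduce via the affine charts to the cyclic quotient singularity $\tfrac{1}{b}(b-1,b-1,1,0,\ldots,0)$ and invoke Reid's criterion. The paper does exactly this in two lines, citing \cite[Thm.~4.11]{Re87}; you spell out the Reid--Tai inequality explicitly, and your computation $(2b-\ell)/b\ge (b+1)/b>1$ is correct. The identification of $U_i$ for $i\ge 3$ with $\bigl(\A^3/\Z_b(b-1,b-1,1)\bigr)\times\A^{n-3}$ is also right.

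One point to clean up: the paragraph computing the discrepancy $a(E,X)$ is irrelevant and reflects a small confusion. The lemma asserts that $\overline X$ itself has terminal singularities, i.e.\ that $a(F,\overline X)>0$ for every divisor $F$ exceptional \emph{over} $\overline X$. The divisor $E$ lies \emph{on} $\overline X$, so $a(E,X)$ plays no role here; there is nothing to ``handle'' for $E$. The chart-by-chart Reid--Tai check is already the complete argument. (Your arithmetic in that paragraph also has stray ``?'' marks and an incorrect partial sum; simply delete it.) Once you drop that paragraph, your proof matches the paper's, with the bonus of being self-contained rather than a bare citation.
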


\begin{proof} By the above description of the open subsets $U_i \subset \overline X$ we must show that a cyclic quotient singularity of type $\frac{1}{b}(b-1,b-1,1,0,\ldots,0)$ is terminal. This follows immediately by \cite[Thm. 4.11]{Re87}. 
\end{proof}

 \bigskip
We now define the symbolic powers of an ideal and check that $\sigma$-weighted ideals behave well with respect to symbolic powers. 

\begin{Definition}
\label{symbolic}
Given an ideal $I \subset R$ in a Noetherian ring $R$ and $t \in \mathbb N$, the $t$-th symbolic power $I^{(t)}$ of $I$  is defined as the restriction of $I^tR_S$ to $R$, where $S$ is the complement of the union of the minimal associated primes of $I$ and $R_S$ is the localization of $R$ at the multiplicative system $S$.  
\end{Definition}

If $I$ is a prime ideal, then  the definition of symbolic power is for instance given in \cite[Definition 9.3.4]{Laz} or in \cite[Exercise 4.13]{AM69}. Note that, by definition, $I^t \subset I^{(t)}$; in general the inclusion might be strict.

\begin{Lemma}\label{symbolic-powers}
Let $\sigma=(a_1,\ldots,a_k,0 \ldots,0) \in \N^n$  such that $a_i >0$ and  $\gcd(a_1,\ldots,a_k)=1$, and let $L$ be a positive integer such that
$$
I_{\s,L}^d=I_{\s,dL}.
$$
for any $d \in \N$, where $I_{\s,d}$ is the $\s$-weighted of degree $d$. Set $I=I_{\s,L}$. 

Then for any $t \in \N$ we have $I^t=I^{(t)}$.
\end{Lemma}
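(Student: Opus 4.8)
The plan is to show that, under the stated hypothesis, the ordinary power $I^t$ is already a primary ideal whose radical we can name explicitly; then $I^t$ coincides with its symbolic power essentially by the definition of the latter.

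First I would pin down the relevant prime. Put $\mathfrak p:=(x_1,\dots,x_k)\subset R:=\C[x_1,\dots,x_n]$, a prime ideal. Every monomial generator of $I_{\s,d}$ has $\s$-weight $\ge d\ge 1$, hence is divisible by some $x_j$ with $j\le k$ and so lies in $\mathfrak p$; thus $I_{\s,d}\subseteq\mathfrak p$ and $\sqrt{I_{\s,d}}\subseteq\mathfrak p$. Conversely $x_i^{\lceil d/a_i\rceil}\in I_{\s,d}$ for $1\le i\le k$, so $\sqrt{I_{\s,d}}=\mathfrak p$ for every $d\ge 1$. In particular $\mathfrak p$ is the unique minimal prime of $I=I_{\s,L}$, so in Definition \ref{symbolic} the set $S$ is $R\setminus\mathfrak p$, $R_S=R_{\mathfrak p}$, and therefore $I^{(t)}=I^tR_{\mathfrak p}\cap R$. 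By hypothesis $I^t=I_{\s,L}^t=I_{\s,tL}$, so it remains to show that $I_{\s,tL}$ is $\mathfrak p$-primary.

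The heart of the argument is the following claim: for every $d\ge 1$ the ideal $I_{\s,d}$ is $\mathfrak p$-primary. Indeed, by Definition \ref{weightedId} it is generated by monomials involving only $x_1,\dots,x_k$ (dropping the $x_{k+1},\dots,x_n$-factors of a generator leaves the $\s$-weight unchanged), so $I_{\s,d}=J_dR$ where $J_d:=I_{\s,d}\cap R'$ and $R':=\C[x_1,\dots,x_k]$. The radical of $J_d$ is the maximal ideal $(x_1,\dots,x_k)R'$, so $J_d$ is primary and $A:=R'/J_d$ is a local Artinian ring. Now $R/I_{\s,d}=A[x_{k+1},\dots,x_n]$ is a polynomial ring over $A$, and such a ring has a single associated prime, namely the extension of the maximal ideal of $A$ (for instance by McCoy's theorem, a polynomial over $A$ is a zero-divisor if and only if it is killed by a nonzero element of $A$, so the zero-divisors of $A[x_{k+1},\dots,x_n]$ form exactly the prime ideal $\mathfrak n A[x_{k+1},\dots,x_n]$, where $\mathfrak n$ is the maximal ideal of $A$). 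Hence $I_{\s,d}=J_dR$ is $\mathfrak p$-primary.

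Finally I would assemble the pieces. Since $I^t=I_{\s,tL}$ is $\mathfrak p$-primary and $\mathfrak p=\sqrt{I^t}$, any $s\notin\mathfrak p$ is a non-zero-divisor modulo $I^t$, so $sf\in I^t$ forces $f\in I^t$; that is, $I^tR_{\mathfrak p}\cap R=I^t$. Combining this with the second paragraph gives $I^{(t)}=I^tR_{\mathfrak p}\cap R=I^t$, as claimed. The only non-formal point is the primariness of $I_{\s,d}$: a monomial ideal with prime radical need not be primary in general, and what rescues us is precisely that $I_{\s,d}$ involves only the variables $x_1,\dots,x_k$ that cut out $\mathfrak p$, which reduces the claim to the elementary facts that an ideal with maximal radical is primary and that a polynomial ring over an Artinian local ring has a unique associated prime.
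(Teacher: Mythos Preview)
Your argument is correct. Both you and the paper identify the radical $\mathfrak p=(x_1,\dots,x_k)$ as the unique minimal prime of $I$, so that $I^{(t)}=I^tR_{\mathfrak p}\cap R$, and both then use the hypothesis $I^t=I_{\s,tL}$. The difference lies in how the key step ``localizing at $\mathfrak p$ does not enlarge $I^t$'' is established.

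The paper exploits directly that $\sigma\textrm{-wt}$ is a discrete valuation on $R$: since $\sigma\textrm{-wt}(fg)=\sigma\textrm{-wt}(f)+\sigma\textrm{-wt}(g)$ and any $s\notin\mathfrak p$ has $\sigma\textrm{-wt}(s)=0$, one reads off immediately that $(I^t:s)=\{g:\sigma\textrm{-wt}(gs)\ge tL\}=\{g:\sigma\textrm{-wt}(g)\ge tL\}=I_{\s,tL}=I^t$. This is a two-line computation and uses nothing beyond the additivity of the weight. You instead prove the stronger structural fact that each $I_{\s,d}$ is $\mathfrak p$-primary, by writing $R/I_{\s,d}\cong A[x_{k+1},\dots,x_n]$ with $A$ Artinian local and invoking McCoy's theorem to see that every zero-divisor has all coefficients in the maximal ideal of $A$, hence is nilpotent. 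This is perfectly valid and yields a cleaner conceptual picture (primariness of \emph{all} $I_{\s,d}$, not just those that happen to be powers of $I$), but it imports a nontrivial external lemma where the paper gets by with the intrinsic valuation structure already set up for the $\sigma$-weighted ideals. In short: same skeleton, but the paper's proof of the key step is shorter and more self-contained, while yours gives a slightly more general primariness statement at the cost of citing McCoy.
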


\begin{proof}
We will use the fact that if $f,g \in  R$ then $\sigma\textrm{-wt}(fg)=\sigma\textrm{-wt}(f)+\sigma\textrm{-wt}(g)$.

We first show that $I$ is primary: if $fg \in I$ then $\sigma\textrm{-wt}(f) \ge 1$ or $\sigma\textrm{-wt}(g) \ge 1$ and hence $f^m \in I$ or $g^m \in I$ for $m$ big enough. Then the only prime associate to $I$ is its radical ideal $r(I)$, which is $r(I)=(x_1,\ldots,x_k)$ since there is always a power of $x_i$ in $I$ for $1\le i\le k$. 

Let now $S=R \backslash r(I)$.  By Proposition 3.11 in \cite{AM69} we have $I^{(t)}=\bigcup_{s \in S}(I^t:s)$. Using  the fact that  $\sigma\textrm{-wt}(s)=0$, we have that for any $s \in S$ 
$$
(I^t : s)=\{g \in R : \sigma\textrm{-wt}(gs)\ge tL \}=\{g \in R : \sigma\textrm{-wt}(g)\ge tL \}=I^t.
$$

\end{proof}

\bigskip

The  definition of weighted blow-up in  \ref{weighted} depends on the local coordinates chosen. To construct a global weighted blow-up along a subvariety $Z$ of a complete variety $X$ one needs to patch together weighted blow-ups defined on a covering of $X$, in such a way that the local coordinates preserve the weight. We propose the following.

\begin{Definition}\label{global}
Let  $X$ be a smooth variety and $Z$ a smooth subvariety of codimension $k$  and let $\sigma=(a_1,\ldots,a_k,0\ldots,0) \in \mathbb N^n$  such that $a_i >0$ and $\gcd(a_1,\ldots,a_k)=1$. 
Let $\m I_{\s,d}$ be ideal sheaves on $X$ such that there is a covering $\{U_i \cong \C^n\}_{i \in I}$ on $X$ 
so that for any $i \in I$  there are local coordinates $x_1,\ldots,x_n$ on $U_i$ for which 
$Z\cap U_i=\{x_1=\ldots=x_k=0\}$  and $\Gamma(U,\m I_{\s,d})=\{g \in \C[x_1,\ldots,x_n] : \sigma\textrm{-wt}(g)\ge d  \}$.
A \textit{weighted blow-up} of $X$ along $Z$ with weight $\sigma$ is the projectivization
$$
\pi: \overline X= \Proj \bigoplus_{d\ge 0} \m I_{\s,d}  \to X.
$$
We call $\m I_{\s,d}$ a \textit{$\sigma$-weighted ideal sheaf} of degree $d$ for $Z$ in $X$.

\end{Definition}

\begin{remark}
Let $Z \subset X$ be a smooth subvariety of codimension $k$; let also  $\sigma=(a_1,\ldots,a_k,0,\ldots,0)$ be a weight. The question about the existence of a \textit{$\sigma$-weighted ideal} for $Z$ in $X$, and therefore of a weighted blow-up of $X$ along $Z$ with weight $\sigma$, is not clear. In general  it seems a difficult problem to find sufficient conditions for a positive answer. 

However, if $Z \subset \bb P^n$ is a complete intersection, then for any weight $\sigma=(a_1,\ldots,a_k,0,\ldots,0)$ there exists a weighted blow-up along $Z$ with weight $\sigma$. In fact, let $F_1,\ldots, F_k$ be a regular sequence of homogeneous polynomials generating the ideal $I_Z\subset R=\C[x_0,\ldots,x_n]$ of $Z$. Then we define a function,  $\sigma\textrm{-wt}$, on $\C[x_0,\ldots,x_n]$ as follows.  If $g \not\in I_Z$ then just set $\sigma\textrm{-wt}(g)=0$. If $g \in I_Z$, then write $g= \sum_{\beta \in \N^{k}} h_\beta F_1^{\beta_1}\cdots F_k^{\beta_k} $, where $\beta=(\beta_1,\ldots,\beta_k)\in \N^k$ and $h_\beta \in R \backslash I_Z$, and set
$$
\sigma\textrm{-wt}(g):=\min_{\beta} \left\{\sum_{i=1}^{k} a_i\beta_i : h_\beta \ne 0\right\}.
$$

Note that the function $\sigma\textrm{-wt}$ is a well defined because $F_1, \ldots, F_k$ is a regular sequence. 

For any  $d \in \N$, we define the $\s$-weighted ideal sheaf  $\m I_{\s,d} \subset \m O_X$ setting,  for any open subset $U \subset \bb P^n$,
$$
\Gamma(U, \m I_{\s,d})=\left\{\frac{f}{g} : f,g \in \C[x_0,\ldots,x_n] \mbox{ are homogeneous, $g_{| U} \not\equiv 0$ and }\sigma\textrm{-wt}(f)-\sigma\textrm{-wt}(g) \ge d  \right\}.
$$

\end{remark}

\section{Proofs}

\begin{proof}[Proof of Theorem \ref{n-2}]

Let $F_1$ be a non trivial fibre of $f$. We pass to a local set-up, i.e. we assume that $f: X \to Z$  is a local F-M contraction around $F_1$ supported by $K_X+\tau L$, where $\tau$ is the nef value of the pair $(X,L)$, a positive rational number greater than $n-2$. Let $F$ be a component of $F_1$. By  Theorem \ref{dimF}, $\dim F \ge \tau > n-2$; this means that $\dim F=n-1$ and hence the exceptional locus of $f$ has codimension 1. Since the exceptional locus of a divisorial contraction is irreducible we conclude that $F$ is the exceptional divisor, and $f$ is the contraction of $F$ to a point $p \in Z$. 
\medskip

Since $f$ is a $K_X$-negative contraction, $Z$ is terminal and $\Q$-factorial (see \cite[Corollary 3.43]{KollarMori}).
Proposition 3.6 of \cite{An11} says that  $p$ is smooth. For the reader's convenience we repeat that proof. More precisely, by induction on $n \ge 2$, we prove that $p$ is smooth in $Z$ and that 
\begin{align}\label{formulas}
K_X= f^*K_Z + ((n-2)b+1)F \ \mbox{ and } \ \tau= n-2 + \frac{1}{b}
\end{align}
for a positive integer $b$ such that $f^* f_* L= L+ bF$.

If $n=2$, $X$ is smooth and in this case we can apply  Castelnuovo's  theorem, which says that $f$ is the contraction of a $(-1)$-curve $F$ to a smooth surface $Z$, therefore $K_X= f^*K_Z +F$. Note that $L_1:=f_*L$ is a Cartier divisor on $Z$ and there is a positive integer $b$ such that $L=f^*L_1 -bF$. From $0=(K_X + \tau L).F=(K_X-\tau b F).F= -1 + \tau b$, we get $\tau=1/b$.

Let $n\ge 3$ and pick a general member $X' \in |L|$: by Theorem \ref{bpf} and Bertini's theorem  it has terminal $\Q$-factorial singularities. Consider the restricted morphism $f':= f_{|X'}: X' \to Z'$; by Lemma \ref{horizontal} it is a divisorial contraction supported by $K_{X'}+(\tau-1)L_{|X'}$. By inductive assumption, $p \in Z'$ is smooth; by \cite[Lemma 1.7]{Me97} we conclude that $p \in  Z$ is smooth, $L_1:= f_*L$ is Cartier and  $L=f^*L_1 -bF$ for a positive integer $b$. Denoting by $F'$ the exceptional divisor of $f'$, by induction we have 
$$
K_{X'}= f^*K_{Z'} + ((n-3)b+1)F' \ \mbox{ and } \ \tau-1= n-3 + \frac{1}{b},
$$
from which \eqref{formulas} follows.

\medskip

Let $X' \in |L|$ be again a general element  and $f':= f_{|X'}: X' \to Z'$ be the restricted morphism. Since $Z$ and $Z' =  f_*X'$ are smooth at $p$ and $f$ is local, we may assume that $Z= \mathbb A^n=\Spec \mathbb C[x_1,\ldots,x_n]$, where $x_1,\ldots,x_n$ are local coordinates for $p$, and that $f_*X'=\{x_n=0\}$.

Note that $\m O_X(-bF)$ is  $f$-ample  and that the map $f$ is proper; so we have that 
$$
X= \Proj ( \oplus_{d \ge 0}  f_*\m O_X(-dbF)).
$$ 

By Lemma \ref{push-forward}, $X$ will be the weighted blow-up we are looking for if 
$$
f_*\m O_X(-dbF)=I_{\s,d}=(x_1^{s_1}\cdots x_n^{s_n}  :  s_1 + s_2 + \sum_{j=3}^n b s_j \ge db).
$$ 

The proof of this is by double induction on $n$ and $d$, starting with $n=2$ and $d=0$.

Consider the exact sequence
$$
0 \to \m O_X(-L-dbF) \to \m O_{X} (-dbF)\to \m O_{X'}(-dbF) \to 0.
$$

Note that
$$
-L-dbF \sim_f -(d-1)bF \sim_f K_X + (n-3 + d+ \frac{1}{b})L. 
$$
Hence, pushing down to $Z$ the above exact sequence and applying the relative Kawamata-Viehweg Vanishing, we have

\begin{align}\label{sequence1}
0 \to  f_* \m O_X(-(d-1)bF) \stackrel{\cdot x_n}\rightarrow f_*\m O_{X} (-dbF)\to f_*\m O_{X'}(-dbF) \to 0.
\end{align}

By induction on $n$, we can assume that
$$
f_*\m O_{X'}(-dbF)=(x_1^{s_1}\cdots x_{n-1}^{s_{n-1}}  :  s_1 + s_2 + \sum_{j=3}^{n-1}bs_j \ge db)
$$
where $s_j \in \bb N$. The case $n=2$ follows from Castelnuovo's theorem. By induction on $d$, we can also assume that
$$
f_*\m O_{X}(-(d-1)bF)=(x_1^{s_1}\cdots x_n^{s_n}  :  s_1 + s_2  + \sum_{j=3}^n bs_j \ge (d-1)b),
$$
the case $d=0$ being trivial.

Let $g=x_1^{s_1}\cdots x_n^{s_n} \in f_*\m O_{X}(-dbF)$ be a monomial.

If $s_n \ge 1$ then $g$, looking at the sequence \eqref{sequence1}, comes from $f_*\m O_{X}(-(d-1)bF)$ by the multiplication by $x_{n}$; therefore 
$$
s_1 + s_2 + \sum_{j=3}^{n-1} s_j b +s_n b \ge  (d-1)b + s_n b \ge db.
$$

If $s_n=0$, then  $g \in f_*\m O_{X'}(-dbF)$ and so 
$$
s_1 + s_2 + \sum_{j=3}^{n} s_j b= s_1 + s_2 + \sum_{j=3}^{n-1} s_j b \ge db.
$$
The non-monomial case follows immediately.

\end{proof}

\bigskip

To prove  Theorem \ref{main} we need some preliminary lemmata. 

The following is a local version of Theorem \ref{main} around a general fibre.

\begin{Lemma} \label{local}
Let $f: X \to Z$ be a local contraction supported by $K_X + \tau L$, where $X$ is terminal $\mathbb Q$-factorial and  $L$ is an $f$-ample Cartier divisor. Assume that $f$ is divisorial and let $E$ be the exceptional divisor. Set $C=F(E) \subset Z$. Assume also that there exists a positive integer $r$ such that $\tau >r$ and that any  fibre of $f$ has dimension less or equal to $r+1$. Let $F$ be a general nontrivial fibre.
Then $f(F)$ is a smooth point and, possibly shrinking $Z$ to a smaller affine neighbourhood of $f(F)$ and choosing appropriate coordinates, we have that $C=\{x_1= ... =x_{ r +2}=0  \} \subset \C^n = Z$ and  $f$ is a weighted blow-up along $C$ with weight $\s=(1,1,b,\ldots,b,0\ldots,0)$, where $b$ is a positive integer and the number of $b$'s is $ r $.
\end{Lemma}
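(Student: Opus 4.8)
The plan is to reduce the statement to Theorem~\ref{n-2} by cutting $Z$ with $\codim_Z C-2$ general global functions (vertical slicing), and then to recover the weighted blow-up structure on $X$ by a push-forward computation parallel to the second half of the proof of Theorem~\ref{n-2}. The first thing to settle is the numerology. Since $F$ is a general nontrivial fibre it lies over $C$, and by Theorem~\ref{dimF} one has $\dim F\ge\tau>r$; together with the hypothesis $\dim F\le r+1$ this forces $\dim F=r+1$ and $r<\tau\le r+1$. As $f$ is divisorial it is an isomorphism outside the irreducible divisor $E$, so $f^{-1}(C)=E$, the map $f|_E\colon E\to C$ is surjective and $F$ is its general fibre; hence $\dim C=\dim E-\dim F=(n-1)-(r+1)$, i.e.\ $\codim_Z C=r+2$. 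Moreover $\tau>0$ gives $K_X\cdot R<0$, so $Z$ is $\Q$-factorial terminal by \cite[Cor.~3.43]{KollarMori}.

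Next I would choose general $h_1,\dots,h_{n-r-2}\in H^0(X,\cO_X)=H^0(Z,\cO_Z)$ and apply Vertical slicing (Lemma~\ref{vertical}) repeatedly. After shrinking $Z$ this produces a birational local contraction $f^{(1)}\colon X^{(1)}\to Z^{(1)}$ with $X^{(1)}=f^{-1}(Z^{(1)})$ of dimension $r+2$, still divisorial (its exceptional locus is $E\cap X^{(1)}$, a divisor for general $h_i$), still $\Q$-factorial terminal, supported by $K_{X^{(1)}}+\tau L$, and contracting its exceptional divisor $E^{(1)}$ to a single point $p^{(1)}\in C$ which is a general point of $C$. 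Now $\tau>r=\dim X^{(1)}-2$, so the ray contracted by $f^{(1)}$ lies in $\overline{NE(X^{(1)})}_{(K_{X^{(1)}}+(\dim X^{(1)}-2)L)<0}$; Theorem~\ref{n-2} (whose proof is local in nature) then shows that $p^{(1)}$ is a smooth point of $Z^{(1)}$ and that $f^{(1)}$ is a weighted blow-up of $p^{(1)}$ with weight $(1,1,b,\dots,b)$ (with $r$ entries equal to $b$, since $\dim X^{(1)}=r+2$), where $b\in\N_{>0}$, $\tau=r+\tfrac1b$ and $f^{(1)*}f^{(1)}_*L=L+bE^{(1)}$. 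A general hyperplane section through a point cannot turn a singular point into a smooth one (the embedded tangent space and the dimension both drop by one for a general section), so smoothness of $p^{(1)}$ in $Z^{(1)}$ forces smoothness of $p^{(1)}$ in $Z$, and $C$ is smooth of codimension $r+2$ at $p^{(1)}$. Thus, shrinking $Z$ around the general point $p^{(1)}$, we may take $Z=\C^n=\Spec\C[x_1,\dots,x_n]$ with $C=\{x_1=\dots=x_{r+2}=0\}$ and $Z^{(1)}=\{x_{r+3}=\dots=x_n=0\}$.

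Finally I would identify $f$ with the weighted blow-up. Since $X$ is $\Q$-factorial with $\rho(X/Z)=1$ and $E$ is the contracted divisor, $\cO_X(-bE)$ is $f$-ample, so $X=\Proj_Z\bigoplus_{d\ge0}f_*\cO_X(-dbE)$; by Lemma~\ref{push-forward} and Proposition~\ref{projective-normality} it then suffices to prove
$$f_*\cO_X(-dbE)=I_{\sigma,db},\qquad \sigma=(1,1,b,\dots,b,0,\dots,0)\ \ (\text{with }r\text{ entries }b),$$
for all $d\ge0$. This I would do by induction on $n-r-2$: the case $n-r-2=0$ is exactly Theorem~\ref{n-2}. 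For the inductive step slice once more vertically by a general $X'=\{f^*h=0\}$ with $Z'=\{h=0\}=\{x_n=0\}$; by induction $f'_*\cO_{X'}(-dbE')=I_{\sigma',db}$ in the variables $x_1,\dots,x_{n-1}$. The identity $f^{(1)*}f^{(1)}_*L=L+bE^{(1)}$ lifts to $f^*f_*L=L+bE$ on $X$ (as $\rho(X/Z)=1$), so $bE\sim_f-L$ and $-X'-dbE\sim_f dL\sim_f K_X+(\tau+d)L$ with $(\tau+d)L$ $f$-ample; relative Kawamata--Viehweg vanishing gives $R^1f_*\cO_X(-X'-dbE)=0$, and pushing down $0\to\cO_X(-X'-dbE)\to\cO_X(-dbE)\to\cO_{X'}(-dbE)\to0$ yields the exact sequence
$$0\to x_n\cdot f_*\cO_X(-dbE)\to f_*\cO_X(-dbE)\xrightarrow{\ \mathrm{restr.}\ }I_{\sigma',db}\to0.$$
As $x_n$ has $\sigma$-weight $0$, the same bookkeeping on monomials and on the $x_n$-degree as at the end of the proof of Theorem~\ref{n-2} then gives $f_*\cO_X(-dbE)=I_{\sigma,db}$, so $X$ is the weighted blow-up of $Z$ along $C$ with weight $\sigma$.

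The main obstacle is this last step. The reduction to Theorem~\ref{n-2} and the dimension count come essentially for free, and the lifting of smoothness is standard (it must be claimed only at the general point of $C$, which is why the conclusion is stated for $f(F)$ with $F$ general). The delicate points are: that vertical slicing really preserves $\Q$-factoriality and the nef value in the local Fano--Mori set-up; that $bE\sim_f-L$ descends correctly from $X^{(1)}$; and that the monomial bookkeeping, hence both inclusions $f_*\cO_X(-dbE)\subseteq I_{\sigma,db}$ and $\supseteq$, goes through once coordinates of $\sigma$-weight $0$ are present and the exact sequence above has survived the push-forward.
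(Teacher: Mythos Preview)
Your reduction to Theorem~\ref{n-2} via vertical slicing, the dimension count, and the smoothness argument coincide with the paper's. The genuine difference is in the push-forward computation. The paper slices \emph{horizontally}: it takes a general $X'\in|L|$, so that the coordinate being peeled off (namely $x_{r+2}$) has $\sigma$-weight $b$, and runs a double induction on $r$ and on $d$, with base case $r=0$ (the ordinary smooth blow-up along a codimension-two centre). You instead slice \emph{vertically} a second time, so that $x_n$ has $\sigma$-weight $0$, and run a single induction on $\dim C=n-r-2$ with base case Theorem~\ref{n-2} itself. Both schemes work, and yours has the pleasant feature of not requiring a separate treatment of $r=0$.

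One point deserves more care. Your exact sequence reads $0\to x_nJ\to J\to I_{\sigma',db}\to 0$, so the kernel is $x_nJ$ (the \emph{unknown} ideal) rather than $x_nI_{\sigma,(d-1)b}$ (a known one, as in the paper's horizontal version). Abstractly this only says $(J:x_n)=J$ and $J\bmod(x_n)=I_{\sigma',db}$, and these two conditions do \emph{not} determine $J$ as a subideal of $\cO_Z$: for instance $(x_1+x_n)$ and $(x_1)$ both satisfy them when $I_{\sigma',db}=(x_1)$. What rescues the argument (and, implicitly, the paper's ``the non-monomial case follows immediately'') is the easy inclusion $I_{\sigma,db}\subseteq J$, which holds because $\ord_E(f^*x_i)=a_i$ once the coordinates are chosen so that their restrictions to $Z^{(1)}$ realise $f^{(1)}$ as the weighted blow-up of Theorem~\ref{n-2}. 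With that inclusion in hand, your sequence does give the reverse inclusion by iterating on the $x_n$-degree: for $g\in J$ one has $g_0:=g\bmod x_n\in I_{\sigma',db}\subseteq I_{\sigma,db}\subseteq J$, hence $(g-g_0)/x_n\in(J:x_n)=J$, and after finitely many steps every $x_n$-coefficient of $g$ lies in $I_{\sigma',db}$, so $g\in I_{\sigma,db}$. Thus your route is sound, but the ``same bookkeeping as in Theorem~\ref{n-2}'' is not literally the same---the kernel carries different information---and you should make the inclusion $I_{\sigma,db}\subseteq J$ explicit rather than fold it into the appeal to Theorem~\ref{n-2}.
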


\begin{proof}  By the assumptions and by  Theorem \ref{dimF}, we gain that $\dim F = r +1$; therefore $\dim C= n-r-2$.

First we prove that $p=f(F)$ is a smooth point of $Z$.  Take $n-r-2$  general functions $h_j \in H^0(X,\mathcal O_X)$, $j=1,\ldots,n-r-2$ and let  $X_j \subset X$ be the divisor defined by $h_j$. By Lemma \ref{vertical}, setting $X"=\bigcap_{j=1}^{n-r+2} X_j$, we have that $f" := f_{|X"}: X" \to Z"$ is a local contraction supported by  $K_{X"} + \tau L_{X"}$, it is birational, it contracts a divisor $F$ to the point $p=C \cap Z"$ and $\tau > r = \dim X" -2$. Note that $p$ is general in $C$  and hence $F$ is a general nontrivial fibre of $f$.  The contraction $f" : X" \to Z"$ satisfies the assumption of  Theorem \ref{n-2} and hence we have that  $f"$ is a weighted blow-up of a smooth point with weight $(1,1,b,\ldots,b)$, where $b$ is a positive integer. 

 Therefore we may assume that $Z" = f"(X")$ is smooth at $p$. Since $Z"$ is an intersection of Cartier divisors in $Z$, we conclude that $Z$ is smooth at $p$.

\medskip

The  proof now is by induction on the dimension of $F$, i.e. on $\dim F = r +1$; for this we   apply Lemma \ref{horizontal}. 

Assume $\dim F =1$. Since $X$ has  terminal singularities, which are in codimension $3$, $F$ is contained in the smooth locus of $X$, and hence, in the local set-up, we may assume that $X$ is smooth.  Therefore $X$ is a smooth blow-up (see for instance Corollary 4.11 in \cite{AW1}):  i.e. $f$ is the blow-up of $C=\{x_1=x_2=0  \} \subset Z$ with  weights $(1,1, 0 \dots, 0)$; in particular we have that 
$$
K_X= f^*K_Z+ E.
$$

If $\dim F= r +1 \ge 2$,  let $X'$ be a general element in $|L|$. By Theorem \ref{bpf} and Bertini's theorem, $X'$  has terminal $\Q$-factorial singularities. Consider the contraction $f':= f_{|X'}: X' \to Z'$; by Lemma \ref{horizontal}, it is a divisorial contraction supported by $K_{X'}+(\tau-1)L_{|X'}$ with fibres of dimension equal to $r$.  We have already proved that we may assume $Z=\C^n$ and that $Z'$ is smooth, therefore, by induction, we may assume  $C=\{x_1=\ldots=x_{r+2}=0  \} \subset Z'=\{x_{r+2}=0\}\subset Z'$ and that $f'$ is the smooth blow-up along $C$.

\medskip
Let $L_1$ be the Cartier divisor $f_*L$; we have  $L= f^*L_1 - bE$ for a positive integer $b$ and $bE$ is a Cartier divisor. 

Reasoning as in the proof of Theorem \ref{n-2}, by horizontal slicing  and by induction on $r$, we get the formulas
$$
K_X= f^*K_Z+ (rb+1)E \quad \mbox{ and } \quad  \tau= r+\frac{1}{b}.
$$

Since  $\m O_X(-bE)$ is  $f$-ample  we have
$$
X= \Proj \oplus_{d \ge 0}  f_*\m O_X(-dbE).
$$ 

\medskip
By Lemma \ref{push-forward} we have  to show that

$$
f_*\m O_X(-dbE)=I_{\s,d}=(x_1^{s_1}\cdots x_n^{s_n} : s_1 + s_2 + \sum_{j=3}^{r+2} s_j b \ge db).
$$ 
The proof is by double induction on $r \ge 0 $ and $d\ge 0$, and it is similar to the proof of Theorem \ref{n-2}.

Consider the exact sequence
$$
0 \to \m O_X(-L-dbE) \to \m O_{X} (-dbE)\to \m O_{X'}(-dbE) \to 0.
$$

Note that
$$
-L-dbE \sim_f 	K_X + (r+(d-1)+\frac{1}{b})L 
$$
and hence, by the Relative Kawamata-Viehweg Vanishing, we have

\begin{align}\label{sequence2}
0 \to  f_* \m O_X(-(d-1)bE) \stackrel{\cdot x_{r+2}}\rightarrow f_*\m O_{X} (-dbE)\to f_*\m O_{X'}(-dbE) \to 0.
\end{align}

By induction on $r$ we can assume that 

$$
f_*\m O_{X'}(-dbE)=(x_1^{s_1}\cdots x_{r+1}^{s_{r+1}}x_{r+3}^{s_{r+3}} \cdots x_{n}^{s_{n}} : s_1 +s_2 + \sum_{j=3}^{r+1} s_j b \ge db)
$$
where $s_j \in \bb N$. We have already treated above the case $r=0$.  By induction  on $d \ge 0$, we can assume that

$$
f_*\m O_{X}(-(d-1)bE)=(x_1^{s_1}\cdots x_n^{s_n} : s_1 + s_2 + \sum_{j=3}^{r+2} s_j b \ge (d-1)b),
$$
 the case $d=0$ being trivial.

Let $g=x_1^{s_1}\cdots x_n^{s_n} \in f_*\m O_{X}(-dbE)$.

If $s_{r+2} \ge 1$ then, looking at the sequence \eqref{sequence2}, $g$ comes from $f_*\m O_{X}(-(d-1)dE)$ by the multiplication by $x_{r+2}$  and so 
$$
s_1 + s_2 + \sum_{j=3}^{r+1} s_j b +s_{r+2}b\ge (d-1)b + s_{r+2} \ge db.
$$

Otherwise  $g \in f_*\m O_{X'}(-bdE)$ and it  satisfies 
$$
s_1 + s_2 + \sum_{j=3}^{r +1} s_j b= s_1 + s_2 + \sum_{j=3}^{r} s_j b \ge db.
$$

\end{proof}

\bigskip

\begin{proof}[Proof of Theorem \ref{main}]
First notice that, as in the first line of the proof of \ref{n-2}, $\dim C = n-r-2$.

We proceed as in  {\cite[Theorem 4.9]{KollarMori92}}.

By  Lemma \ref{local}, $C\cap \mathrm{Sing}(Z) \subsetneq C$; moreover since $Z$
has terminal singularities, $\mathrm{Sing}(Z)$  has codimension at least three. Therefore we can find a codimension three closed subset $S \subset Z$ such that $Z'=Z\backslash S$ is smooth and $f^{-1}(S)$ has codimension at least two in $X$. Moreover, by Lemma \ref{local}, we may assume that $X'=X \backslash f^{-1}(S)$ is a weighted blow-up of $Z'$ along $C'=C\backslash S$ with weight $\s=(1,1,b,\ldots,b,0,\ldots,0) \in \N^n$. This proves (i).

\medskip 

To prove (ii), since $X =  \Proj \bigoplus_{m\ge 0} f_* \m O_X(-mbE)$,  we need to show that
\begin{align}\label{eqsymbolic} 
f_* \m O_X(-mbE) = \m I^{(m)}.
\end{align}

Note that by Proposition \ref{projective-normality} we have
$$
f_*(\m O_X(-mbE))_{|Z'}= (\m I_{|Z'})^m.
$$

By definition of symbolic power, and using the fact proved in Lemma \ref{symbolic-powers}  
that $(\m I_{|Z'})^m=(\m I_{|Z'})^{(m)}=(\m I^{(m)})_{|Z'}$, we obtain

$$
i_*((\m I^{(m)})_{|Z'})=\m I^{(m)}.
$$

Therefore \ref{eqsymbolic} follows by
$$
i_*(f_*(\mathcal O_X(-mbE))_{|Z'})=f_*(\mathcal O_X(-mbE)),
$$
which is a consequence of the following general fact.

\begin{Lemma}\label{injection}
Let $f: U \to V$ be a proper morphism. Let $S \subset V$ be a closed subset such that the codimension of $f^{-1}(S)$ in $U$ is at least two. Let $\mathcal F$ be a sheaf that satisfies Serre's condition $S_2$ (e.g. U is normal and $\mathcal F$ is reflexive). Then $f_*\mathcal F= i_*(f_*\mathcal F _{|V \backslash S})$, where $i: V \backslash S \to V$ is the injection. 
\end{Lemma}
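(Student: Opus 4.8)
The plan is to reduce the statement to the standard ``algebraic Hartogs'' principle for $S_2$ sheaves, and then to transport it through $f$ using only formal properties of the direct image. First I would set $W:=f^{-1}(S)\subset U$ and let $j\colon U\setminus W\hookrightarrow U$ and $i\colon V\setminus S\hookrightarrow V$ be the open immersions. By hypothesis $W$ has codimension at least two in $U$; in the example case ($U$ normal, $\mathcal F$ reflexive) this is the same as saying that $W$ has codimension at least two in $\supp\mathcal F=U$, which is the form that matters below.

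The heart of the argument is the claim that the canonical restriction map $\mathcal F\to j_*(\mathcal F|_{U\setminus W})$ is an isomorphism. Since $\mathcal F$ satisfies Serre's condition $S_2$ and $W$ has codimension $\ge 2$ in $\supp\mathcal F$, the sheaf $\mathcal F$ has depth $\ge 2$ along $W$, so the local cohomology sheaves $\mathcal H^0_W(\mathcal F)$ and $\mathcal H^1_W(\mathcal F)$ vanish; the exact sequence $0\to\mathcal H^0_W(\mathcal F)\to\mathcal F\to j_*(\mathcal F|_{U\setminus W})\to\mathcal H^1_W(\mathcal F)$ then gives the isomorphism. This is the step that needs the most care to write out rigorously: one has to recall that $S_2$ forbids embedded primes and, together with the codimension bound, forces depth $\ge 2$, and one should keep an eye on equidimensionality of $\supp\mathcal F$ (harmless here, where $U=X$ is an irreducible variety). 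It is, however, completely standard; in the reflexive case it is exactly the familiar fact that reflexive sheaves on a normal variety extend uniquely across closed loci of codimension $\ge 2$.

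Next I would exploit that $U\setminus W=f^{-1}(V\setminus S)$: hence $f$ restricts to a morphism $f'\colon U\setminus W\to V\setminus S$, the square with horizontal arrows $j,i$ and vertical arrows $f,f'$ is Cartesian, and in particular $f\circ j=i\circ f'$. Applying $f_*$ to the isomorphism of the previous paragraph and using functoriality of pushforward, $f_*j_*=(f\circ j)_*=(i\circ f')_*=i_*f'_*$, yields $f_*\mathcal F\cong i_*f'_*(\mathcal F|_{U\setminus W})$.

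Finally I would identify $f'_*(\mathcal F|_{U\setminus W})$ with $(f_*\mathcal F)|_{V\setminus S}$. This is immediate from the definition of the direct image: for any open $V'\subseteq V$ with preimage $U'=f^{-1}(V')$ and any sheaf $\mathcal G$ on $U$, the restriction of $f_*\mathcal G$ to $V'$ agrees with $(f|_{U'})_*(\mathcal G|_{U'})$, because the preimage of an open subset of $V'$ is the same whether computed in $V$ or in $V'$. Taking $V'=V\setminus S$ and combining with the previous step gives $f_*\mathcal F\cong i_*\bigl((f_*\mathcal F)|_{V\setminus S}\bigr)$, and a routine check shows this isomorphism is the canonical restriction map, proving the lemma. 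Apart from the $S_2$-extension invoked above, every step here is formal.
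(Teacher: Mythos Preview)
Your proof is correct. The paper, however, does not actually prove this lemma: it merely states it as a ``general fact'' at the end of the proof of Theorem~\ref{main} and leaves the verification to the reader. Your argument---reduce to the $S_2$/Hartogs extension $\mathcal F\cong j_*(\mathcal F|_{U\setminus W})$ on $U$, then push forward using $f_*j_*=(f\circ j)_*=(i\circ f')_*=i_*f'_*$ together with the identification $(f_*\mathcal F)|_{V\setminus S}=f'_*(\mathcal F|_{U\setminus W})$---is precisely the standard way to establish the result, and it supplies what the paper omits. One minor remark: properness of $f$ plays no role in your argument, and indeed it is not needed for the statement; the paper presumably records it only because it holds in the intended application.
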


\bigskip

\end{proof}

\bigskip


\begin{thebibliography}{99}

\bibitem{An1}
{\bibname M. Andreatta}.
\newblock  Some remarks on the study of good contractions,
\newblock  {\em Manuscripta Math.},
  volume 87, 1995,  359--367.
  
\bibitem{An11} 
{\bibname M. Andreatta}. 
\newblock Minimal Model Program with scaling and adjunction theory. 	
\newblock {\em Internat. J. M.} Vol. 24 (2013), no. 02. 

\bibitem{AW1}
{\bibname M. Andreatta \and  J. A. Wi\'sniewski}.
\newblock  A note on nonvanishing and applications,
\newblock  {\em Duke Math. J.},
volume 72 n. 3, 1993,  739--755.
  
\bibitem{AM69}
{\bibname M. F. Atiyah \and  I. G. Macdonald}. 
\newblock Introduction to commutative algebra, 
\newblock{ \em Addison-Wesley Publishing Co., Reading, Mass.-London-Don Mills}, 
Ont. 1969 ix+128 pp. 

\bibitem{BeltramettiSommese}
{\bibname M. Beltrametti \and A.J. Sommese}.
\newblock {\em The Adjunction Theory of Complex Projective Varieties}, 
volume 16 of {\em Expositions in Mathematics},
\newblock De Gruyter, Berlin-New York, 1995.
  
  
\bibitem{Hayakawa}
{\bibname T. Hayakawa}.
\newblock  Blowing ups of 3-dimensional terminal singularities, 
\newblock  Publ. Res. Inst. Math. Sci. 35 (1999), 515--570.
   
  
   \bibitem{Kaw}
{\bibname M. Kawakita}.
\newblock  Divisorial contractions in dimension three which contract divisors to smooth points,
\newblock  {\em  Invent. Math. }
vol. 145, no. 1, 2001, 105--119.
    
 \bibitem{KollarMori92}
{\bibname J. Koll{\'a}r \and S. Mori}. 
\newblock Classification of three-dimensional flips.
\newblock J. Amer. Math. Soc. 5 (1992), no. 3, 533–-703.
14E30 (14B07 14E05 14E35 14J30)     
    
\bibitem{KollarMori}
{\bibname J. Koll{\'a}r \and S. Mori}.
\newblock {\em Birational geometry of algebraic varieties}, volume 134 of {\em
  Cambridge Tracts in Mathematics}.
\newblock Cambridge University Press, Cambridge, 1998.
\newblock With the collaboration of C. H. Clemens and A. Corti, Translated from
  the 1998 Japanese original.
  
  

\bibitem{Laz}
{\bibname R. Lazarsfeld}.
\newblock Positivity in Algebraic Geometry {II}.
\newblock Ergebnisse der Mathematik und ihrer Grenzgebiete, vol. 49, Springer-Verlag, Berlin, 2004. 

  
\bibitem{Me97}
{\bibname M. Mella}.
\newblock Adjunction theory on terminal varieties.
\newblock  Complex analysis and geometry (Trento, 1995), 153–-164, Pitman Res. Notes Math. Ser., 366, Longman, Harlow, 1997.
      
\bibitem{Mo82}
{\bibname  S. Mori}.
\newblock Threefolds whose canonical bundles are not numerically effective,
\newblock  {\em Annals of Math},
  vol. 116, 1982, 133--176.
  
\bibitem{Mo88}
{\bibname S. Mori}.
\newblock Flip theorem and the existence of minimal models for $3$-folds,
\newblock  {\em Jour. AMS},
  vol. 1, 1998, 117--253.
  
  
\bibitem{Re87} {\bibname M. Reid}. Young person's guide to canonical singularities. Algebraic geometry, Bowdoin, 1985 (Brunswick, Maine, 1985), 345–-414, Proc. Sympos. Pure Math., 46, Part 1, Amer. Math. Soc., Providence, RI, 1987.  


\end{thebibliography}
\end{document}